\documentclass[11pt,reqno]{amsart}
\pdfoutput=1
\usepackage{enumerate}
\usepackage{amssymb}
\usepackage{graphicx}

\catcode`\@=11

\long\def\@savemarbox#1#2{\global\setbox#1\vtop{\hsize\marginparwidth 
  \@parboxrestore\tiny\raggedright #2}}
\marginparwidth .75in \marginparsep 7pt

\newcommand\lref[1]{\ref{#1}%
\@ifundefined{r@DisplaY #1}{}{ (#1)}}

\newcommand\fakelabel[2]{\@bsphack\if@filesw {\let\thepage\relax
   \newcommand\protect{\noexpand\noexpand\noexpand}%
\xdef\@gtempa{\write\@auxout{\string
      \newlabel{#1}{{#2}{\thepage}}}}}\@gtempa
   \if@nobreak \ifvmode\nobreak\fi\fi\fi\@esphack}

\def\SL@margintext#1{{\showlabelsetlabel{\tiny\{\SL@prlabelname{#1}\}}}}
\catcode`\@=12

\def\Empty{}
\newcommand\oplabel[1]{
  \def\OpArg{#1} \ifx \OpArg\Empty {} \else
        \label{#1}
  \fi}
%
\newtheorem{theoremSt}{Theorem}[section]

\newtheorem{exampleSt}[theoremSt]{Example}
\newtheorem{exerciseSt}[theoremSt]{Exercise}

%

%
\newcommand\MakeStEnv[1]{
  \newenvironment{#1}[1]{
  \begin{#1St} \oplabel{##1}%
  \global\def\CrntSt{\thetheoremSt}%
}{ 
  \end{#1St} }
  \newenvironment{#1+}[1]{
  \begin{#1St} \label{##1}%
  \label{DisplaY ##1}%
  \global\def\CrntSt{\thetheoremSt}%
  \def\Labl{##1}\ifx\Labl\Empty{} \else {\em (\Labl)\,}\fi%
}{ 
  \end{#1St} }
}
\MakeStEnv{theorem}
\MakeStEnv{corollary}
\MakeStEnv{proposition}
\MakeStEnv{lemma}
\MakeStEnv{definition}
\MakeStEnv{conjecture}
\MakeStEnv{problem}
\MakeStEnv{question}


%

%
\newlength{\saveu}





%

%
\newenvironment{pf*}[1]{%
 \begin{proof}[#1]%
}{ 
 \end{proof}
}

\newcommand{\finishproof}[1]{ 
  \def\FPArg{#1}
  \ifx\FPArg\Empty
        \newcommand\FPArg{\CrntSt}  \fi
  \smallbreak\noindent\makebox[\textwidth]{\hfill\fbox{\FPArg}}
  \medbreak\noindent
}


\newcommand\CC{{\mathcal C}}

\newcommand\FF{{\mathcal F}}

\newcommand\LL{{\mathcal L}}
\newcommand\MM{{\mathcal M}}

\newcommand\PP{{\mathcal P}}

\newcommand\UU{{\mathcal U}}

\newcommand\WW{{\mathcal W}}

\newcommand\PMF{{\PP\kern-2pt\MM\FF}}
\newcommand\ML{{\MM\LL}}
\newcommand\PML{{\PP\kern-2pt\MM\LL}}

\newcommand\ep{\epsilon}

\newcommand\hhat{\widehat}

\newcommand\union{\cup}

\newcommand\bbR{{\mathord{\text{I\kern-2pt R}}}}        
\newcommand\bbH{{\mathord{\text{I\kern-2pt H}}}}        

\newcommand\Hyp{{\mathbb H}}



\newcommand\bigrightarrow[1]{\hbox to #1{\rightarrowfill}}
\newcommand\bigleftarrow[1]{\hbox to #1{\leftarrowfill}}

\newcommand\boundary{\partial}
\newcommand\semidir{\mathrel{\hbox{\vrule depth-.03ex height1.1ex\kern-0.15em$\times$}}}

\newcommand\del{\nabla}

\newcommand{\diam}{\operatorname{diam}}

\numberwithin{equation}{section}


\catcode`\@=11

%
%
\def\subsection{\@startsection{subsection}{2}%
  \z@{.5\linespacing\@plus.7\linespacing}{.5em}%
  {\normalfont\bfseries\centering}}

%
%
\def\section{\@startsection{section}{1}%
  \z@{.7\linespacing\@plus\linespacing}{.5\linespacing}%
  {\normalfont\large\bfseries\centering}}

%
%
\def\subsubsection{\@startsection{subsubsection}{3}%
  \z@{.5\linespacing\@plus.7\linespacing}{-.5em}%
  {\normalfont\bfseries}}

\catcode`\@=12


\newcommand{\fsubd}{\mathrel{{\scriptstyle\searrow}\kern-1ex^d\kern0.5ex}}
\newcommand{\bsubd}{\mathrel{{\scriptstyle\swarrow}\kern-1.6ex^d\kern0.8ex}}
\newcommand{\fsubeq}{\mathrel{\raise-.7ex\hbox{$\overset{\searrow}{=}$}}}
\newcommand{\bsubeq}{\mathrel{\raise-.7ex\hbox{$\overset{\swarrow}{=}$}}}

\newcommand{\EL}{\mathcal{EL}}

\newcommand{\tsh}[1]{\left\{\kern-.9ex\left\{#1\right\}\kern-.9ex\right\}}

\newcommand\Teich{{\mathcal T}}


\newcommand{\ba}{{\boldsymbol \alpha}}

\begin{document}

\title[Local topology in deformation spaces II]{Local topology in deformation spaces of hyperbolic 3-manifolds II}

\author[Brock]{Jeffrey F. Brock}
\author[Bromberg]{Kenneth W. Bromberg}
\author[Canary]{Richard D. Canary}
\author[Lecuire]{Cyril Lecuire}
\author[Minsky]{Yair N. Minsky}
\date{\today}
\thanks{
Brock was partially supported by DMS-1608759, Bromberg was partially supported by DMS-1509171, Canary was partially supported by DMS-1306992 and DMS-1564362, and Minsky
was partially supported by DMS-1311844 and DMS-1610827 from the National Science Foundation. 
Lecuire was partially supported by  ANRproject GDSous/GSG no. 12-BS01-0003-01.}

\begin{abstract}
We prove that  the deformation space $AH(M)$ of marked hyperbolic 3-manifolds
homotopy equivalent to a fixed compact 3-manifold $M$  with
incompressible boundary is locally connected at  quasiconformally rigid points.
\end{abstract}

\maketitle

\setcounter{tocdepth}{1}

\newcommand\epzero{\ep_0}
\newcommand\epone{\ep_1}
\newcommand\epotal{\ep_{\rm u}}
\newcommand\kotal{k_{\rm u}}
\newcommand\Kmodel{K_0}
\newcommand\Kone{K_1}
\newcommand\Ktwo{K_2}
\newcommand\bdry{\partial} 
\newcommand\stab{\operatorname{stab}}
\newcommand\nslices[2]{#2|_{#1}}
\newcommand\ME{M\kern-4pt E}
\newcommand\bME{\overline{M\kern-4pt E}}
\renewcommand\del{\partial}
\newcommand\s{{\mathbf s}}
\newcommand\pp{{\mathbf p}}
\newcommand\qq{{\mathbf q}}
\newcommand\uu{{\mathbf u}}
\newcommand\vv{{\mathbf v}}
\newcommand\zero{{\mathbf 0}}

\newcommand{\cB}{{\mathcal B}}
\newcommand\mm{\operatorname{\mathbf m}}
\newcommand{\dehntw}{\theta}  

\section{Introduction}

The space $AH(M)$ of (marked) hyperbolic 3-manifolds homotopy
equivalent to a fixed compact orientable 3-manifold $M$ plays an important role
in the theory of 3-manifolds and is of interest in geometry and
dynamics in general, by way of analogy with other parameter spaces
such as those of conformal dynamical systems. The topology of this
space at its boundary points is quite intricate and remains poorly
understood after many years of study.

The main result of this paper is the following theorem proving local
connectivity at a natural class of boundary points of $AH(M)$,
whenever $M$ has incompressible boundary. This
generalizes previous work of Brock-Bromberg-Canary-Minsky
\cite{nobumping}, Ito \cite{ito-selfbump}  and
Ohshika \cite{ohshika-selfbump}. It
can be contrasted with work of Bromberg \cite{bromberg-PT} and Magid \cite{magid} showing that
$AH(M)$ is not everywhere locally connected when $M$ is an untwisted interval bundle.

\begin{theorem}{main}
If $M$ is a compact, hyperbolizable orientable 3-manifold with incompressible boundary and $\tau\in AH(M)$
is quasiconformally rigid, then $AH(M)$ is locally connected at $\tau$.
\end{theorem}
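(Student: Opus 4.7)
My plan is to analyze $AH(M)$ locally at $\tau$ through its end invariants, and to build connecting paths by combining a quasiconformal (Teichm\"uller-type) deformation with a cone-manifold deformation that opens or closes cusps. I would proceed in four steps.

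\textbf{Step 1: Parameter structure at $\tau$.} Because $\tau$ is quasiconformally rigid, its conformal boundary has trivial Teichm\"uller space; all remaining moduli of the end invariants come from the parabolic locus $P(\tau)$. For $\rho\in AH(M)$ near $\tau$, the generalized geometric continuity results used in \cite{nobumping} imply that each curve of $P(\tau)$ is either still parabolic in $\rho$ or is represented by a very short geodesic whose length tends to $0$ as $\rho\to \tau$; the remaining end invariants of $\rho$ vary in a small piece of a quasiconformal deformation space near $\tau$.

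\textbf{Step 2: Choice of shrinking neighborhoods.} Given a neighborhood $U$ of $\tau$, I would choose a smaller $V\subset U$ so small that every curve of $P(\tau)$ has length below the Margulis constant for every $\rho\in V$, and the conformal structures on the non-thrice-punctured components of the conformal boundary of $\rho$ vary in an arbitrarily small compact region. This is possible by continuity of length and end invariants.

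\textbf{Step 3: Construction of paths.} For each $\rho\in V$, I would build a path from $\rho$ to $\tau$ in two stages. First, within the quasiconformal deformation space $QH(\rho)\subset AH(M)$, move to an intermediate $\rho'$ whose conformal boundary matches that of $\tau$ on every component where $\rho$ is not accidentally cusped; this is a path in a Teichm\"uller-type complex manifold. Second, apply a cone-manifold deformation, in the spirit of Bromberg's drilling theorem, that decreases the cone angle on each short geodesic representing a curve of $P(\tau)\setminus P(\rho)$ continuously from $2\pi$ to $0$. The endpoint has the same end invariants as $\tau$, so by quasiconformal rigidity together with the Ending Lamination Theorem, it equals $\tau$.

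\textbf{Step 4: Confinement to $U$.} The crux is to keep these paths inside $U$ for all $\rho\in V$. This requires effective bilipschitz estimates, of the drilling/filling type, applied along the cone-manifold path, together with continuity of the Teichm\"uller leg. The main obstacle I would expect is the simultaneous treatment of several short curves, whose cone deformations interact, and the interleaving with the Teichm\"uller deformation: one must be careful that moving in the quasiconformal direction does not amplify the algebraic distance from $\tau$ once some short geodesics have been only partially shrunk. A careful ordering of the two stages --- for instance, shrinking cone angles before making large Teichm\"uller motions, or deforming all parameters together along a controlled path --- should, combined with the estimates of \cite{nobumping} and its extensions by \cite{ito-selfbump} and \cite{ohshika-selfbump}, give the confinement needed to conclude that $V$ is contained in the path-component of $\tau$ inside $U$, proving local connectedness at $\tau$.
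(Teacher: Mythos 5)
Your plan diverges significantly from the paper's approach and contains several gaps that would prevent it from being carried out as described.

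\textbf{Discontinuity of end invariants.} In Step~2 you invoke ``continuity of length and end invariants'' to control the conformal structures near $\tau$. Lengths of fixed curves do vary continuously, but end invariants famously do \emph{not} vary continuously on $AH(M)$ (Anderson--Canary, Brock), and this is explicitly the main difficulty the paper is designed to sidestep. The paper replaces direct control of conformal structures by control of subsurface projections $\pi_W(\nu(\rho))$, using the convergence result of \cite{pull-out} (Theorem~\ref{endlams in limit}) to define the neighborhood system $\mathcal U(\delta,\mathbb U,\tau)$.

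\textbf{Geometrically infinite ends.} Since $\tau$ is quasiconformally rigid, every component of $\partial_c N_\tau$ is a thrice-punctured sphere, so the only genuine moduli to match are the ending laminations $\lambda_W$ on the non--thrice-punctured components $W$ of $\partial_0 M\setminus p_\tau$, which bound geometrically infinite ends. Your Step~2 speaks of matching conformal structures on ``non-thrice-punctured components of the conformal boundary of $\rho$,'' but there are no such components for $\tau$ to match to; what must be controlled is the coarse position of $\nu(\rho)$ in each curve complex $\mathcal C(W)$ relative to $\lambda_W \in \partial_\infty\mathcal C(W)$. This part of the neighborhood data is missing from your setup.

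\textbf{Cone-manifold paths are not in $AH(M)$.} Step~3 proposes to reach $\tau$ by continuously decreasing cone angles along short geodesics representing $p_\tau$. Leaving aside that rank-one cusps (curves on $\partial M$) do not fit the standard drilling/filling framework for closed geodesics and rank-two cusps, a hyperbolic cone-manifold with cone angle strictly between $0$ and $2\pi$ has holonomy that is not a discrete, faithful representation of $\pi_1(M)$ in the usual sense, so the cone-deformation path is a path in $\mathrm{Hom}(\pi_1(M),\mathrm{PSL}(2,\mathbb C))$ but not in $AH(M)$. To prove local connectivity of $AH(M)$ at $\tau$ one needs paths inside $AH(M)$; the paper keeps the entire path inside $\mathrm{int}(AH_0(M))$ by doing the pinching within $\Teich(\partial_0 M)$ (Lemma~\ref{twist and shrink}).

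\textbf{The skinning map obstruction.} The crux of confinement in Step~4 is not primarily a drilling estimate; it is that the length of $\alpha\in p_\tau$ in $N_\rho$ is governed (Theorem~\ref{KGCC results}) by the pair $(\nu(\rho),\sigma_M(\rho))$ of conformal boundary and skinning image, and the skinning map is a priori wild. This is exactly the new difficulty beyond the acylindrical case of \cite{nobumping}, and it is handled by Proposition~\ref{preliminary bounds}, whose proof relies on the convergence theorem of \cite{BBCL} together with Proposition~\ref{convexembed}. Your proposal does not address the skinning map at all, and without such a bound one cannot guarantee that the pinching path keeps $\ell_\alpha(\rho_t)$ small; the drilling estimates you invoke control geometry near the drilled curve but not the far-side end invariant $\sigma_M(\rho_t)$ that could in principle drift and lengthen $\alpha$.

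In summary, the genuine new ideas in the paper --- the neighborhood system built from subsurface projections, the compactness input of Lecuire's doubly-incompressible criterion, and the skinning-map bound via \cite{BBCL} --- are absent from your outline, and the cone-manifold mechanism you propose to replace them would take you outside $AH(M)$. The Teichm\"uller-navigation lemmas of \cite{nobumping} (twist-and-shrink, $W$-connected) are the correct in-$AH(M)$ substitute for the drilling step.
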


A hyperbolic 3-manifold $N$ is {\em quasiconformally rigid} if any hyperbolic \hbox{3-manifold} which is bilipschitz homeomorphic
to $N$ is actually isometric to $N$. Equivalently, $N$ is quasiconformally rigid if every component of its
conformal boundary is a thrice-punctured sphere. Note that the
conformal boundary may also be empty. See Section \ref{background} for detailed definitions.

Bromberg \cite{bromberg-PT} showed that the space of Kleinian
punctured torus groups fails to be locally connected at some, but not all, points whose conformal boundary
consists of one once-punctured torus and two thrice-punctured spheres. 
Therefore, one cannot expect any better result whose assumptions simply restrict the complexity
of the conformal boundary.

Our argument actually establishes the stronger fact
that components of the interior of $AH(M)$ cannot {\em self-bump} at a quasiconformally rigid point.
(We say that a component $C$ of the interior of $AH(M)$
self-bumps at $\rho$ if any sufficiently small neighborhood of $\rho$ in $AH(M)$ has disconnected
intersection with $C$.)

\begin{theorem}{nonselfbump}
Let $M$ be a compact, hyperbolizable 3-manifold with incompressible boundary. If $\tau\in AH(M)$
is quasiconformally rigid, then no component of ${\rm int}(AH(M))$ self-bumps at $\tau$.
\end{theorem}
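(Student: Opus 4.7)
The plan is to show that any two sequences $\rho_n, \rho_n' \in C$ converging to $\tau$ can be joined by paths $\gamma_n \subset C$ with $\gamma_n \to \{\tau\}$ in the Hausdorff topology, which is equivalent to non-self-bumping. Since $C$ is a component of $\text{int}(AH(M))$, it is a quasiconformal deformation space parametrized by the Teichm\"uller space $\Teich(\partial_c)$ of the conformal boundary of a reference structure in $C$ (via the Ahlfors--Bers map). Because $\tau$ is quasiconformally rigid, its conformal boundary is a disjoint union of thrice-punctured spheres (or empty), so approach to $\tau$ within $C$ forces pinching of a specific multicurve $\mathcal{C}$ on $\partial_c$, namely the curves whose representatives become parabolic in $\tau$. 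The incompressible boundary hypothesis ensures that $\mathcal{C}$ is intrinsically determined by $\tau$ together with the marked homeomorphism class labelling $C$, so all sequences in $C$ approaching $\tau$ degenerate in the same combinatorial pattern.

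The main tool would be Bromberg's Drilling Theorem. I would drill out the Margulis tubes of the (soon-to-be parabolic) geodesic representatives of $\mathcal{C}$ in each $\rho_n$ and $\rho_n'$, producing sequences $\hat\rho_n, \hat\rho_n'$ in the enlarged pared deformation space $AH(M, \mathcal{P}_\tau \cup \mathcal{C})$, where $\mathcal{P}_\tau$ is the parabolic locus of $\tau$. By quasiconformal rigidity, $\tau$ viewed in this pared space has no residual Teichm\"uller moduli, so it is an isolated rigid point. Hence for large $n$, both $\hat\rho_n$ and $\hat\rho_n'$ lie arbitrarily close to a single isolated point and can be joined by short paths whose length is controlled by the bilipschitz estimates of the Drilling Theorem. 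Reversing the drilling via hyperbolic Dehn filling then produces candidate paths in $C$ converging to $\tau$, with filling coordinates parametrized by a product of punctured disks, one factor for each component of $\mathcal{C}$, carrying the complex twist/length parameters.

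The main obstacle is verifying that the inverse Dehn filling operation is well-defined and continuous all the way to the drilled $\tau$, and that the filling parameter space is genuinely connected (disk-like) rather than a multi-sheeted configuration whose sheets separate near the degeneration. This is precisely the phenomenon that can fail for untwisted interval bundles (Bromberg \cite{bromberg-PT}, Magid \cite{magid}); the quasiconformal rigidity of $\tau$ is needed to eliminate the extraneous Teichm\"uller moduli responsible for those failures, while incompressible boundary ensures compatibility between the pinching pattern $\mathcal{C}$ on $\partial_c$ and the 3-dimensional topology of $M$. Executing this step rigorously should combine Bromberg's deformation theory for hyperbolic cone-manifolds with the end-invariant and model-manifold machinery of Minsky and Brock--Canary--Minsky, applied uniformly across structures in $C$ near $\tau$, so that the constructed paths are simultaneously close to $\tau$ and stay within $C$ (rather than jumping to an adjacent component with the same geometric limit).
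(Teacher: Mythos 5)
Your proposal is a genuinely different route from the paper's, but it has a serious gap that makes it insufficient even as a plan. The paper's actual argument has nothing to do with drilling or cone manifolds: it constructs an explicit neighborhood system $\UU(\delta,\mathbb U,\tau)$ for $\tau$ out of length bounds on $p_\tau$ and curve-complex neighborhoods of the ending laminations (Proposition~\ref{nbhd system}, which rests on Lecuire's doubly-incompressible compactness criterion), then navigates Teichm\"uller space via Fenchel--Nielsen coordinates (Lemmas~\ref{twist and shrink} and~\ref{W connected}, imported from \cite{nobumping}) together with a new skinning-map bound (Proposition~\ref{preliminary bounds}) that controls the hidden end invariant $\sigma_M(\rho)$.

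The gap: quasiconformal rigidity does \emph{not} force $\tau$ to be geometrically finite. The definition only requires every component of the conformal boundary to be a thrice-punctured sphere, and the conformal boundary can be empty; $\tau$ can, and generically does, have geometrically infinite ends supported on the subsurfaces $W\in\mathcal W_\tau$ with ending laminations $\lambda_W$. Your argument treats the approach to $\tau$ within $C$ as entirely a matter of pinching a multicurve $\mathcal C$, and the claim that after drilling $\mathcal C$ the pared manifold $(M,\mathcal P_\tau\cup\mathcal C)$ has ``no residual Teichm\"uller moduli, so it is an isolated rigid point'' is false whenever $\mathcal W_\tau\neq\emptyset$: the pared deformation space still contains the full Teichm\"uller parameters of the ends on $\mathcal W_\tau$, and $\tau$ sits on the boundary of that space at an ending lamination, not at an isolated point. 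The drilling/filling picture simply has no purchase on this part of the degeneration; controlling it is exactly what the subsurface-projection and ending-lamination machinery in Sections~3--5 of the paper is for. Even in the special case $\mathcal W_\tau=\emptyset$ (a maximal cusp), the step you flag as the ``main obstacle''---that the filling parameters landing in $C$ near $\tau$ form a connected, disk-like set---is the self-bumping question itself, and the Drilling Theorem's bilipschitz estimates do not settle it; some version of the skinning-map control in Proposition~\ref{preliminary bounds} would still be needed to see that the paths of fillings you want to take remain in $C$ and converge to $\tau$.
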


\subsection*{History and prior results}

We recall briefly the history of the subject.
The interior of $AH(M)$ is well-understood due to
the work of Bers \cite{bers-spaces}, Kra \cite{kra}, Marden \cite{marden}, Maskit \cite{maskit}, Sullivan \cite{sullivanII} and
Thurston \cite{thurston-survey}. The components of the interior are enumerated by
marked homeomorphism types and each component is parameterized by analytic data (see Bers \cite{bers-survey} for a
survey of this theory in analytic language and Canary-McCullough
\cite[Chapter 7]{canary-mccullough} for a survey  in topological
language). The entire space $AH(M)$ is the closure of its interior 
(see Bromberg \cite{bromberg-density}, Brock-Bromberg \cite{brock-bromberg}, Brock-Canary-Minsky \cite{ELC2},
Namazi-Souto \cite{namazi-souto}, and Ohshika \cite{ohshika-density}.)
The Ending Lamination Theorem (see Minsky \cite{ELC1}, Brock-Canary-Minsky \cite{ELC2}, and Bowditch
\cite{bowditch})
provides a classification of hyperbolic 3-manifolds in $AH(M)$ in terms of ending invariants which record
the asymptotic geometry, but these ending invariants vary discontinuously (see Anderson-Canary \cite{ACpages}
and Brock \cite{brock-invariants}).    

We will focus on the case where $M$ has incompressible boundary. Anderson and Canary \cite{ACpages}
discovered that components of the interior of $AH(M)$ can {\em bump}, i.e. have intersecting closure,
and Anderson, Canary and McCullough \cite{ACM} characterized which components can bump in terms of
the topological change of homeomorphism type involved. McMullen \cite{mcmullenCE}, in the untwisted
interval bundle case, and Bromberg and Holt \cite{bromberg-holt}, more generally, discovered that  components
of the interior of $AH(M)$ can self-bump. Bromberg \cite{bromberg-PT} and Magid \cite{magid} showed that $AH(M)$ fails
to be locally connected when $M$ is an untwisted interval bundle and Bromberg conjectures that $AH(M)$
is never locally connected. See Canary \cite{canary-bumponomics} for a more complete discussion of the
topology of $AH(M)$.

In an earlier paper \cite{nobumping} we showed that there is no bumping or self-bumping at
a representation $\rho\in AH(M)$ if all parabolic elements of $\rho(\pi_1(M))$ 
lie in rank two abelian subgroups. If there is no bumping or self-bumping
at $\rho$ then we say that $\rho$ is {\em uniquely approachable} and we notice that $AH(M)$ is
locally connected at uniquely approachable representations.  
We further showed  that there is no bumping at quasiconformally rigid points (\cite[Thm. 1.2]{nobumping}), and that if $M$
is either an untwisted interval bundle or an acylindrical 3-manifold, then quasiconformally rigid
points are uniquely approachable (\cite[Cor. 1.4]{nobumping}). 
Theorems \ref{main} and  \ref{nonselfbump} extend these results to the general
incompressible-boundary case. 
We note that Ito \cite{ito-selfbump} and 
Ohshika \cite{ohshika-selfbump} obtained related results in the setting of quasifuchsian groups.

\subsection*{Sketch of proof}
The geometry of a hyperbolic 3-manifold is determined by its {\em end invariants}.
However, since these invariants do not vary continuously over $AH(M)$, one must
be extremely careful when using them to understand the topology of $AH(M)$.

Let $\tau$ be a quasiconformally rigid point on the boundary of a component $C$ of
${\rm int}(AH(M))$. Assume for simplicity that
$M$ is not an interval bundle, since there will be some additional complications in
this case, and that the boundary $\partial M$ of $M$ has no toroidal components.
We may assume, by replacing $M$ by a homotopy equivalent manifold,
that the quotient manifold $N_\tau=\mathbb H^3/\tau(\pi_1(M))$
is homeomorphic to ${\rm int}(M)$, so
the end invariants of $\tau$ consist of a curve system, called the {\em parabolic locus},
$p_\tau$ on $\partial M$, associated to the cusps of $N_\tau$, and a
lamination $\lambda_W$ filling each component $W$ of 
$\partial M \setminus p_\tau$ which is not a 3-holed sphere. 
Since we have previously shown that there is no bumping at $\tau$, our goal is
to show that $C$ does not self-bump at $\tau$. 

\subsubsection*{Neighborhood system}
Given $\rho\in C$, the ending data of $\rho$ is a conformal structure on $\boundary M$ and we
can consider its projection $\pi_W(\rho)$ to the {\em curve complex} $\ \CC(W)$ of
each component $W$ of $\boundary M \setminus p_\tau$. We define a
``neighborhood'' for $\tau$ in $C$ by choosing a neighborhood in each $\CC(W)$ of the
ending lamination component $\lambda_W$ and requiring the projections
$\pi_W(\rho)$ lie there, and also placing a bound
on the  length $\ell_\rho(\alpha)$ in $N_\rho$ of  each component
$\alpha$ of $p_\tau$.  Proposition \ref{nbhd system} shows that a sequence of
representations in $C$ converges to $\tau$ if and only it eventually lies in every such ``neighborhood.''
Equivalently, we say that sets of this form are the intersection with
$C$ of a neighborhood system for $\tau$ in $AH(M)$.
When $M$ is acylindrical, we established this in \cite[Lemma 8.2]{nobumping}.
 
The first new ingredient in our more general situation is a compactness theorem of Lecuire
\cite{lecuire-masurdomain} (Theorem \ref{doublyincomp}),
which is applied to show that a sequence of representations eventually contained in
any neighborhood of the above form must have a convergent
subsequence. If $\rho$ is a limit of such a sequence, 
one applies our earlier result on convergence of ending invariants (\cite[Thm. 1.3]{pull-out})
to exhibit a homotopy equivalence from $N_\tau$ to $N_{\rho}$ which
takes cusps to cusps and geometrically infinite ends to geometrically infinite ends
with the same ending laminations. A topological observation of Canary and Hersonsky 
\cite[Prop. 8.1]{canary-hersonsky} allows one to upgrade this homotopy equivalence
to a homemorphism and we conclude that $\tau=\rho$ by applying the Ending Lamination
Theorem \cite{ELC2}. The converse, that any sequence in $C$ converging to $\tau$ eventually lies
in any neighborhood of the form above, follows nearly immediately from \cite[Thm. 1.3]{pull-out}.

\subsubsection*{Skinning map and control of subsurface projections}
If $M=S\times [0,1]$ for a closed surface $S$, we use the shorthand $AH(S)$ for $AH(S\times [0,1])$.
The end invariant of a Kleinian surface group $\rho\in AH(S)$ consists of the
end invariant $\nu_+(\rho)$ on the top component $S\times \{1\}$ of $\partial (S\times [0,1])$
and the end invariant $\nu_-(\rho)$ of the bottom component $S\times \{0\}$.
Given a curve $\alpha$ on $S$, we define a 
quantity $\mm_\alpha(\nu_+(\rho),\nu_-(\rho))$ 
(see Section \ref{background}) which depends on the 
length of $\alpha$ in $\nu_+(\rho)$ and $\nu_-(\rho)$ and the distances $d_W(\nu_+(\rho),\nu_-(\rho))$ between projections
of $\nu_+(\rho)$ and $\nu_-(\rho)$ onto subsurfaces bordering $\alpha$. It follows from
work of  Minsky \cite{ELC1}  that
$\mm_\alpha(\nu_+(\rho),\nu_-(\rho))$ is ``large'' if and only if
$\ell_\rho(\alpha)$ is ``small'' (see Theorem \ref{KGCC results}).
This allows us to determine membership in the above neighborhood
system by studying the quantity $\mm_\alpha$.

In general, if  $\rho\in C\subset AH(M)$ and $S$ is a component of $\partial M$, we consider
the restriction $\rho^S=\rho|_{\pi_1(S)}$.  With
suitable orientation convention,
$\nu_+(\rho^S)$ is just the restriction of the
ending invariant of $\rho$ to $S$. The other ending invariant
$\nu_-(\rho^S)$ is determined more subtly ---
it is the image by Thurston's {\em skinning map} of the  full ending
invariant of $\rho$. Thus the control needed for determining if a
point is in the aforementioned neighborhoods of $\tau$ requires some
control over the skinning map.
In the acylindrical case, Thurston's Bounded Image Theorem provided this control.

In Proposition \ref{preliminary bounds} we show that if $\alpha$ is one of the curves in $p_\tau\cap S$
and $\rho$ is close enough to $\tau$, then the contribution of $\nu_-(\rho^S)$ to
$\mm_\alpha(\nu_+,\nu_-)$ is bounded from above.
More explicitly, if we fix a marking $\mu$ on $S$ and 
define $\mm_\alpha(\mu,\nu_-)$ similarly to $\mm_\alpha(\nu_+,\nu_-)$, 
then $\mm_\alpha(\mu,\nu_-(\rho^S))$
is bounded above in some neighborhood of $\tau$.
Proposition \ref{preliminary bounds} is a consequence of Theorem \ref{BBCL result}, 
a result on convergence of Kleinian surface groups proven by
Brock-Bromberg-Canary-Lecuire \cite{BBCL}.

\subsubsection*{Navigation in Teichm\"uller space}
The last main hurdle in establishing local connectivity is to show that,
within such neighborhoods, one can find paths in which the projection
data can be sufficiently well controlled. The tools for this are provided by our
earlier work \cite{nobumping}, where we  studied how subsurface projections behave 
as one deforms in Teichm\"uller space. With this in hand we rule out self-bumping by showing that for each
neighborhood $\UU$ of $\tau$ there is a smaller neighborhood $\UU'$
such that any two points in $\UU'$ can be connected by a path in
$\UU$, see Proposition \ref{joining nearby points}. 

\medskip\noindent 
{\bf Acknowledgements:} We thank the referee for helpful comments on an earlier version
of this paper. The authors  also gratefully acknowledge  support from U.S. National Science Foundation grants 
DMS 1107452, 1107263, 1107367 ``RNMS: GEometric structures And Representation varieties" (the GEAR Network).

\section{Background}
\label{background}

\subsection{Hyperbolic 3-manifolds and their deformation spaces}

We will assume throughout the paper that  $M$ is a compact, orientable, hyperbolizable 3-manifold 
with non-empty incompressible boundary which is not an interval bundle over the torus.
We recall that $M$ is {\em hyperbolizable} if its interior admits
a complete metric of constant negative curvature $-1$ and that $M$ has {\em incompressible boundary}
if whenever $S$ is a component of the boundary $\partial M$ of $M$, then $S$ is not a sphere and
the inclusion map of $S$ into $M$ induces an injection of $\pi_1(S)$ into $\pi_1(M)$. 
Let $\partial_0M$ denote the collection of non-toroidal components of $\partial M$.

Let $AH(M)$ denote the space of (conjugacy classes of) discrete, faithful representations of $\pi_1(M)$
into $\rm{PSL}(2,\mathbb C)$. We view $AH(M)$ as a quotient of a subset of the space
${\rm Hom}(\pi_1(M),\rm{PSL}(2,\mathbb C))$ of homomorphisms of $\pi_1(M)$ into $\rm{PSL}(2,\mathbb C)$.
We give ${\rm Hom}(\pi_1(M),\rm{PSL}(2,\mathbb C))$ the compact-open topology and $AH(M)$ inherits
the induced quotient topology.  In the case that \hbox{$M=S\times [0,1]$} and $S$ is a closed oriented surface,
we use the shorthand $AH(S)$ for $AH(S\times [0,1])$.

If  $\rho\in AH(M)$, then $N_\rho=\mathbb H^3/\rho(\pi_1(M))$ is a complete hyperbolic \hbox{3-manifold}. 
There exists a universal constant $\mu>0$, called the {\em Margulis constant}, so that if $\epsilon<\mu$, then
every component of 
$$(N_\rho)_{(0,\epsilon)}=\{x\in N_\rho\ | \ {\rm inj}(x)<\epsilon\}$$
(where ${\rm inj}(x)$ is the injectivity radius of $N_\rho$ at $x$), is either a {\em Margulis tube}, 
i.e. an open solid torus neighborhood of a closed geodesic in $N_\rho$ or a {\em cusp}, i.e. a quotient
of a horoball $H$ in $\mathbb H^3$ by a group of parabolic elements of $\rho(\pi_1(M))$  which preserve $H$.
We fix $\epsilon_0<\mu$ and let $N_\rho^0$ be obtained from $N_\rho$ by removing the cusps in
$(N_\rho)_{(0,\epsilon_0)}$. A {\em relative compact core} $M_\rho$ for $N_\rho^0$ is a compact submanifold of $N_\rho^0$
so that the inclusion of $M_\rho$ into $N_\rho$ is a homotopy equivalence and if $P_\rho=M_\rho\cap \partial N_\rho^0$
and $R$ is a component of $\partial N_\rho^0$, then the inclusion of $R\cap P_\rho$ into $R$ is a homotopy equivalence
(see Kulkarni-Shalen \cite{kulkarni-shalen} or
McCullough \cite{mcculloughRCC} for the existence of relative compact cores).
Bonahon \cite{bonahon} showed that $N_\rho$ is homeomorphic to the interior ${\rm int}(M_\rho)$ of $M_\rho$ 
and that each component of $N_\rho^0-{\rm int}(M_\rho)$ is bounded by a component $F$ of $\partial M_\rho-{\rm int}(P_\rho)$
and is homemorphic to $F\times [0,\infty)$.

There is a homotopy equivalence $h_\rho:M\to M_\rho$, well-defined up to homotopy, in the homotopy class
determined by $\rho$.
It will often be natural to restrict to the subset $AH_0(M)$ where $h_\rho$ is homotopic to an orientation-preserving
homeomorphism.
If $h:M\to M'$ is a homotopy equivalence, then
$h$ induces an identification of $AH(M)$ with $AH(M')$. In particular, if $\rho \in AH(M)$, then we may identify
$AH_0(M)$ with $AH(M_\rho)$, so that $\rho\in AH_0(M_\rho)$.  So, in our study of the local topology of $AH(M)$,
it does not reduce generality to assume that we are in a neighborhood of a 
representation in $AH_0(M)$. Moreover, a quasiconformally rigid representation in $AH_0(M)$ has a neighborhood in $AH(M)$
which is entirely contained in $AH_0(M)$, see Theorem \ref{no bump} below.

\subsection{Ending invariants}

If $W$ is a compact orientable hyperbolizable surface with boundary, other than the disk or annulus, then
the vertex set of 
its {\em curve complex}  $\ \mathcal C(W)$ is the set of (isotopy classes of) simple, closed, non-peripheral
curves on $W$.  If $W$ is not a one-holed torus or a four-holed sphere, 
a collection $\{\alpha_0,\alpha_1,\ldots,\alpha_n\}$ of vertices of $\mathcal C(W)$ span a
$n$-simplex in $\mathcal C(W)$ if and only if the (isotopy classes of) curves have mutually disjoint representatives. 
Masur and Minsky \cite{masur-minsky} proved that $\mathcal C(W)$ is  Gromov hyperbolic and Klarreich \cite{klarreich}
(see also Hamenst\"adt \cite{hamenstadt})
identified its Gromov boundary $\partial_\infty \mathcal C(W)$ with the space of filling  minimal geodesic laminations
on $W$ which are the support of a measured lamination. Recall that, given a background finite area metric on $W$,
a {\em geodesic lamination} is a closed subset which is a disjoint union of geodesics. A geodesic lamination is
{\em filling} if it intersects every closed geodesic on $W$ essentially. A {\em measured lamination} is a geodesic
lamination together with a transverse measure, i.e. an assignment of a measure to each arc transverse
to the lamination which is invariant under isotopies preserving the lamination.

Let $p_\rho$ be the core curves
of the annular components of $P_\rho=M_\rho\cap N_\rho^0$. Then $p_\rho$ is called the {\em parabolic locus} of
$\rho$ and is a well-defined isotopy class of curves in $\partial_0M_\rho$.

Let $\Omega(\rho)$ denote the
largest open subset of $\partial_\infty\mathbb H^3=\widehat{\mathbb C}$ which $\rho(\pi_1(M))$ acts properly
discontinuously on. Then
$$\partial_cN_\rho=\Omega(\rho)/\rho(\pi_1(S))$$
is a Riemann surface, called the {\em conformal boundary}, and \hbox{$\widehat N_\rho=N_\rho\cup \partial_cN_\rho$}
is a 3-manifold with boundary.
We say that $\rho$ is {\em quasiconformally rigid} if every component of the conformal boundary is a
thrice-punctured sphere.

If $W$ is a component of $\partial_0M_\rho-p_\rho$,
then either $W$ is parallel to a component of $\partial_cN_\rho$, in $\widehat{N_\rho}-M_\rho$, in which case we say that $W$ bounds
a geometrically finite end, and one obtains a well-defined
finite area conformal structure on $W$ from the parallel component of $\partial_cN_\rho$,
or $W$ bounds a geometrically infinite end. If $W$ bounds a geometrically
infinite end, then there exists a sequence $\{\alpha_n\}$ of simple closed curves on $W$ whose geodesic
representatives $\{\alpha_n^*\}$ in $N_\rho$ intersect the component of $N_\rho-M_\rho$ bounded by $h_\rho(W)$ and exit
every compact subset of $N_\rho$.  In this case, the sequence $\{\alpha_n\}$ converges to a point
in $\partial_\infty\mathcal C(W)$, which is called the {\em ending lamination} of $W$ in $N_\rho^0$. 

The {\em ending invariant} $\nu(\rho)$ of $\rho$ consist of the parabolic locus $p_\rho$ on $M_\rho$, the conformal structures on
components of $\partial_0M_\rho-p_\rho$ which bound a geometrically finite end, and the ending laminations
on the components which bound geometrically infinite ends. The Ending Lamination Theorem \cite{ELC1,ELC2,bowditch}
implies that if $\rho,\sigma\in AH(M)$ and there exists a homeomorphism $g:M_\rho\to M_\sigma$, in the homotopy
class of $\sigma\circ\rho^{-1}$ which takes the end invariants of $\rho$ to the end invariants of $\sigma$, then
there exists an isometry $G:N_\rho\to N_\sigma$ in the homotopy class of $\sigma\circ\rho^{-1}$ which takes
ends in $N_\rho^0$ to the ends of $N_\sigma^0$ with the corresponding ending data. 
See the discussion in Minsky \cite[Sec. 2]{ELC1} for more details on ending invariants.

\subsection{Topology of deformation spaces}
The topology of the interior  ${\rm int}(AH(M))$ of $AH(M)$ is well understood. 
Sullivan \cite{sullivanII} proved that $\rho\in AH(M)$ lies in the interior of $AH(M)$ if and only if the ending
invariants consist entirely of conformal structures on non-toroidal boundary components of $\partial M_\rho$, i.e.
$N_\rho^0$ has no geometrically infinite ends or annular boundary components corresponding to rank one cusps.
If $\rho\in {\rm int}(AH_0(M))$, then we may identify $M_\rho$ with $M$ and
view the end invariant $\nu(\rho)$ as an element of $\mathcal T(\partial_0M)$. Work of 
Bers \cite{bers-spaces}, Kra \cite{kra}, Marden \cite{marden}, and Maskit \cite{maskit} implies that this identification
induces a homeomorphism between ${\rm int}(AH_0(M))$
and $\Teich(\partial_0M)$. In general, if $\rho$ lies in a component $C$ of ${\rm int}(AH(M))$,
one obtains an identification of $C$ with $\mathcal T(\partial M_\rho)$, but $M_\rho$ need not be homeomorphic to $M$
(see \cite{canary-mccullough} for a more detailed discussion of this parameterization and its history).
Brock, Canary and Minsky \cite{ELC2}
proved that when $M$ has incompressible boundary, then $AH(M)$ is the closure of ${\rm int}(AH(M))$, and
Namazi-Souto \cite{namazi-souto} and Ohshika \cite{ohshika-density} established the same fact when $M$ is
any compact hyperbolizable manifold. However, Anderson and Canary \cite{ACpages} showed that the closure
of ${\rm int}(AH_0(M))$ need not be entirely contained in $AH_0(M)$.

In our earlier paper, we investigated the ``bumping locus'' of $AH(M)$ and showed that $AH(M)$ cannot bump
at a quasiconformally rigid point $\rho$, i.e. a quasiconformally rigid point is in the closure of exactly
one component of ${\rm int}(AH(M))$. (In fact, the result there holds without the restriction that $M$ have incompressible
boundary.)

\begin{theorem}{no bump}{\rm (\cite[Thm. 1.2 and Prop. 3.2]{nobumping})}
If $M$ is a compact, orientable hyperbolizable 3-manifold with incompressible boundary
and  $\rho_0\in \partial AH(M)$ is quasiconformally rigid,
then $AH(M)$ cannot bump at $\rho$. Moreover, 
there exists a neighborhood $U$ of $\rho_0$ in $AH(M)$, so that if $\rho\in U$, then there
exists an orientation-preserving homeomorphism $h:N_{\rho}\to N_{\rho_0}$ in the homotopy
class of $\rho_0\circ \rho^{-1}$.
\end{theorem}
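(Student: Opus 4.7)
The plan is to prove the \emph{moreover} clause first, from which the no-bumping statement follows almost formally. Indeed, since components of ${\rm int}(AH(M))$ are enumerated by marked homeomorphism types of their relative compact cores, once we know every $\rho$ in some neighborhood $U$ of $\rho_0$ has $N_\rho$ homeomorphic to $N_{\rho_0}$ via a map in the homotopy class of $\rho_0\circ\rho^{-1}$, all such $\rho\in U\cap{\rm int}(AH(M))$ must lie in a single component of ${\rm int}(AH(M))$; two sequences from distinct components converging to $\rho_0$ are thus impossible. So I work with an arbitrary sequence $\rho_n\to\rho_0$ in $AH(M)$, and try to produce, for large $n$, an orientation-preserving homeomorphism $h_n:N_{\rho_n}\to N_{\rho_0}$ in the appropriate homotopy class.

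After passing to a subsequence, the groups $\rho_n(\pi_1(M))$ converge geometrically to some Kleinian group $\widehat\Gamma\supseteq\rho_0(\pi_1(M))$; set $\widehat N=\Hyp^3/\widehat\Gamma$, so that $N_{\rho_0}$ covers $\widehat N$ and one has $(1+\ep_n)$-bilipschitz almost-embeddings of larger and larger compact pieces of $\widehat N$ into $N_{\rho_n}$. The crucial step is to show convergence is \emph{strong}, i.e.\ that $\widehat\Gamma=\rho_0(\pi_1(M))$. Since $M$ has incompressible boundary, $\pi_1(M)$ is freely indecomposable, and Bonahon's tameness theorem applies both to $N_{\rho_0}$ and $\widehat N$. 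I would rule out a nontrivial cover $N_{\rho_0}\to\widehat N$ end by end: on the geometrically finite side, quasiconformal rigidity forces every such end of $N_{\rho_0}^0$ to be a thrice-punctured sphere, whose unique conformal structure admits no nontrivial conformal self-cover; on the geometrically infinite side, the standard continuity of ending laminations under algebraic convergence in the freely indecomposable setting forces the corresponding ends of $\widehat N$ to match those of $N_{\rho_0}$. Parabolic loci are matched using J\o rgensen's inequality together with the observation that each parabolic of $\rho_0$ must arise as an algebraic limit of either parabolic or extremely short geodesic elements in $\rho_n$, both of which contribute cusps of matching rank in the geometric limit.

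Once strong convergence is in hand, pull back a relative compact core $M_{\rho_0}\subset N_{\rho_0}^0$ via the approximating $(1+\ep_n)$-bilipschitz maps to obtain compact submanifolds $K_n\subset N_{\rho_n}^0$ which, for large $n$, are relative compact cores of $N_{\rho_n}^0$. The restricted pullback provides orientation-preserving homeomorphisms $K_n\to M_{\rho_0}$ in the homotopy class of $\rho_0\circ\rho_n^{-1}$, and these extend over the product structure of the ends (Bonahon) to the required orientation-preserving homeomorphisms $h_n:N_{\rho_n}\to N_{\rho_0}$ in the correct homotopy class.

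The hard part will be establishing the strong convergence of Step 2. Without quasiconformal rigidity, a conformal boundary component with positive-dimensional Teichm\"uller parameter could support a nontrivial geometric limit in which the cover $N_{\rho_0}\to\widehat N$ wraps, so that bumping could genuinely occur; the role of the thrice-punctured-sphere hypothesis is precisely to eliminate this slack on every geometrically finite end. What remains is to handle geometrically infinite ends and cusps, and the technical burden lies in combining Bonahon's tameness, the continuity of ending invariants under algebraic convergence, and the cusp-matching via J\o rgensen's inequality into a single coherent end-by-end analysis of the covering $N_{\rho_0}\to\widehat N$.
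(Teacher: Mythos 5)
Your overall architecture matches the actual proof: reduce to the ``moreover'' clause (using the enumeration of components of ${\rm int}(AH(M))$ by marked homeomorphism type), pass to a geometric limit $\widehat N={\mathbb H}^3/\widehat\Gamma$ along a sequence $\rho_n\to\rho_0$, and pull back a compact core through the approximating bilipschitz maps. That part is fine.

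The fatal step is your claim that convergence is \emph{strong}, i.e.\ that $\widehat\Gamma=\rho_0(\pi_1(M))$. This is false at quasiconformally rigid points, and the quasiconformal rigidity hypothesis does not rescue it. The relevant failure mode is not that an end of $N_{\rho_0}$ wraps around itself under a nontrivial finite self-cover --- which is what your thrice-punctured-sphere argument rules out --- but that $\widehat\Gamma$ strictly contains $\rho_0(\pi_1(M))$ with \emph{infinite} index, so $\widehat N$ acquires topology that is simply not present in $N_{\rho_0}$. Already for Kleinian surface groups converging to a maximal cusp, suitably chosen sequences produce geometric limits in which rank-one cusps of $\rho_0$ are replaced by rank-two cusps or by Dehn-filled solid tori; this is exactly the phenomenon exploited by Bromberg and Magid to prove non-local-connectivity, and it occurs at quasiconformally rigid boundary points. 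So an end-by-end analysis of a finite covering $N_{\rho_0}\to\widehat N$ never gets off the ground: the map is not a finite cover.

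What actually replaces strong convergence is Anderson--Canary--Culler--Shalen's embedding theorem (quoted here as Proposition~\ref{convexembed}): the convex core, hence a relative compact core $M_{\rho_0}\subset N_{\rho_0}^0$, embeds in $\widehat N$ under the covering projection. The point of quasiconformal rigidity in that argument is that the boundary of the convex core consists of totally geodesic thrice-punctured spheres, and this prevents the covering from identifying points of $C(N_{\rho_0})$ --- it is not a constraint on the full geometric limit. With the embedding in hand, your pullback step works: for large $n$ the approximating bilipschitz maps carry $M_{\rho_0}$ to embedded submanifolds $K_n\subset N_{\rho_n}^0$, and algebraic convergence guarantees $\pi_1(K_n)\to\pi_1(N_{\rho_n})$ is onto, so $K_n$ is a relative compact core of $N_{\rho_n}^0$ and the map $K_n\to M_{\rho_0}$ is in the homotopy class of $\rho_0\circ\rho_n^{-1}$. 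So you should replace ``establish strong convergence'' by ``establish that $M_{\rho_0}$ embeds in the geometric limit,'' which is a genuinely different (and weaker) statement requiring the convex-core argument rather than an end-matching/covering-degree argument.
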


We will make crucial use of a key tool in the  proof of this result. Recall that a sequence $\{\Gamma_n\}$
of Kleinian groups is said to converge {\em geometrically} to a Kleinian group $\Gamma$
if $\{\Gamma_n\}$ converges to $\Gamma$ in the Chabauty topology on closed subsets of $\rm{PSL}(2,\mathbb C)$.
If  a sequence $\{\rho_n\}$ converges to $\rho$ in $AH(M)$, then we may choose representatives in 
${\rm Hom}(\pi_1(M),\rm{PSL}(2,\mathbb C))$, still called $\{\rho_n\}$ and $\rho$, so that 
$\{\rho_n\}$ converges to $\rho$ in ${\rm Hom}(\pi_1(M),\rm{PSL}(2,\mathbb C))$ and $\{\rho_n(\pi_1(M))\}$
converges geometrically to a torsion-free Kleinian group $\hat\Gamma$ which contains $\rho(\pi_1(M))$,
see J\o rgenson-Marden \cite[Prop. 3.8]{jorgenson-marden}. In this
situation, there is a natural covering map from $N_\rho$ to $\hat N=\mathbb H^3/\hat\Gamma$.
If $\rho$ is quasiconformally rigid, then this covering map restricts to an embedding on some
relative compact core for $N_\rho^0$:

\begin{proposition}{convexembed}{\rm (Anderson-Canary-Culler-Shalen \cite[Prop. 3.2, Remark 3.3]{ACCS})}
If $\rho$ is a quasiconformally rigid point in $\partial AH(M)$
and $\{\rho_n\}$ converges to $\rho$
and $\{\rho_n(\pi_1(M))\}$ converges geometrically to $\hat\Gamma$, then there is
a compact core $M_\rho$ for $N_\rho$ which embeds in $\widehat N=\Hyp^3/\hat\Gamma$
under the obvious covering map.
\end{proposition}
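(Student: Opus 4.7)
The plan is to select a relative compact core bounded by geometrically rigid surfaces---totally geodesic thrice-punctured spheres, toral cusp cross-sections, and pleated surfaces exiting geometrically infinite ends---and then exploit the interplay of algebraic and geometric convergence to show that the covering $\pi : N_\rho \to \widehat N$ induced by $\rho(\pi_1(M)) \subset \hat\Gamma$ must be injective on this core.

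First I would use quasiconformal rigidity: every component of $\partial_c N_\rho$ is a thrice-punctured sphere, which admits a unique complete finite-area hyperbolic structure. Consequently any pleated surface in $N_\rho$ homotopic to such a component is automatically totally geodesic and embedded. Taking these totally geodesic faces as the geometrically finite boundary of the core, together with pleated surfaces deep in each geometrically infinite end realizing simple closed curves whose geodesic representatives exit that end, plus standard torus and annular cross-sections for the cusps, yields a compact relative compact core $M_\rho \subset N_\rho^0$ whose frontier is geometrically controlled.

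To show $\pi$ is injective on $M_\rho$, suppose for contradiction that $\pi(x) = \pi(y)$ for distinct $x,y \in M_\rho$. Lifting to $\Hyp^3$, there exists $g \in \hat\Gamma \setminus \rho(\pi_1(M))$ identifying chosen lifts. Geometric convergence gives $g = \lim g_n$ with $g_n \in \rho_n(\pi_1(M))$; algebraic convergence of $\{\rho_n\}$ would force $g \in \rho(\pi_1(M))$ if $g_n$ were the image under $\rho_n$ of a fixed element of $\pi_1(M)$, so we must have $g_n = \rho_n(\gamma_n)$ with $\gamma_n$ varying in $\pi_1(M)$. If $x$ and $y$ lie on (or near) totally geodesic thrice-punctured sphere faces $F_1$ and $F_2$ of $M_\rho$, then $g$ conjugates a rank-two free boundary subgroup of $\rho(\pi_1(M))$ into another such subgroup; the rigidity of thrice-punctured spheres combined with algebraic convergence pins $g$ into $\rho(\pi_1(M))$, a contradiction. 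A parallel analysis handles identifications near pleated surfaces in geometrically infinite ends, using that the realized geodesics lift uniquely to $\widehat N$ and are approximated by the corresponding geodesics in the $N_{\rho_n}$.

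The hard part will be the geometrically infinite ends, whose geometry is far less rigid than that of thrice-punctured spheres. One must choose the defining pleated surfaces deep enough in each end that the whole of $M_\rho$ lies within a region on which the geometric approximation $N_{\rho_n} \to \widehat N$ is nearly isometric onto a neighborhood of the core. In addition, one must verify that the ending laminations of the geometrically infinite ends of $N_\rho$ persist as ends of $\widehat N$, so that no unexpected geometric coincidence between distinct ends of $N_\rho$ can be manufactured by elements of $\hat\Gamma \setminus \rho(\pi_1(M))$ identifying regions of $M_\rho$ deep in the ends.
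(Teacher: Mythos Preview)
The paper does not prove this proposition at all: it is a citation to Anderson--Canary--Culler--Shalen, and the only argument supplied is the remark that ACCS actually shows the entire convex core $C(N_\rho)$ embeds in $\widehat N$, after which one simply takes $M_\rho$ to be any compact core sitting inside $C(N_\rho)$. So the ``paper's proof'' is a one-line reduction to a black box.

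Your proposal, by contrast, is an attempt to reprove the ACCS embedding result from scratch, and as written it has genuine gaps. The central contradiction argument is not an argument: from $g = \lim \rho_n(\gamma_n)$ with $\gamma_n$ varying, there is no mechanism offered that forces $g$ into $\rho(\pi_1(M))$. Saying that $g$ ``conjugates a rank-two free boundary subgroup into another such subgroup'' and that ``rigidity of thrice-punctured spheres combined with algebraic convergence pins $g$'' does not explain why such a conjugating element must already lie in the algebraic limit; geometric limits can and do contain extra elements that permute or stabilize limit-set features without belonging to $\rho(\pi_1(M))$. You also explicitly defer the geometrically infinite case (``the hard part will be\ldots''), so that portion is a to-do list rather than a proof. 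Finally, the assertion that ends of $N_\rho$ ``persist as ends of $\widehat N$'' is itself a substantial covering-theorem style statement that needs the very embedding you are trying to establish.

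If you want a self-contained argument, the honest route is to reproduce the ACCS proof: show that the nearest-point retraction from the domain of discontinuity to the convex hull, combined with the hypothesis that each conformal boundary component is a thrice-punctured sphere (so carries no moduli and contributes no room for the limit set to grow), forces the limit set of $\hat\Gamma$ to coincide with that of $\rho(\pi_1(M))$, whence the convex cores agree and the covering is injective there. That is the actual content behind the citation, and your sketch does not yet reach it.
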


\noindent
{\bf Remark:} The results of \cite{ACCS} actually show that the convex core $C(N_\rho)$ of $N_\rho$ embeds in $\widehat N$.
One may then simply take $M_\rho$ to be a  compact core for $C(N_\rho)$, which is hence
a compact core for $N_\rho$, as in the proof of \cite[Prop. 3.2]{nobumping}, to obtain our Proposition \ref{convexembed}.

\subsection{Subsurface projections and Kleinian surface groups}

It is a central ingredient in the proof of the Ending Lamination Theorem 
that the subsurface projections of the ending invariants of a Kleinian surface group $\rho\in AH(S)$
coarsely determine the geometry of $N_\rho$. We recall several explicit forms of this crucial principle.

If $\rho\in AH(S)$, then its end invariant decomposes as a pair $(\nu_+(\rho),\nu_-(\rho))$
where $\nu_+(\rho)$ is the ending invariant of the upward pointing end of $N_\rho$.
We recall that $AH(S)=AH_0(S\times [0,1])$ and that there exists an orientation-preserving
homeomorphism $h_\rho:S\times [0,1]\to M_\rho$ and we call $h_\rho(S\times \{1\})$ the upward-pointing
component of $\partial M_\rho$ and call its associated end invariant upward-pointing. Curves in
the parabolic locus of the upward-pointing end invariant are called upward-pointing parabolic curves.

If $W$ is an essential subsurface of $S$, one may (coarsely) define a subsurface projection 
$\pi_W:\mathcal E(S)\to \mathcal C(W)$
where $\mathcal E(S)$ is the collection of possible ending invariants on $S$. If $W$ is not an annulus and
$\nu\in \mathcal T(S)$ is a conformal structure on $S$, then $\pi_W(\nu)$ is obtained by considering a
shortest curve, in the induced hyperbolic structure on $S$, which essentially intersects $W$ and surgering
it with $\partial W$ to obtain an element of $\mathcal C(W)$. 
There are several choices involved in this construction, but there is a uniform upper bound on the distance
between any two such curves, and we simply choose one of the curves obtained in this manner.
One must take more care in the case
that $\nu$ is a general ending invariant or $W$ is an incompressible annulus in $S$, see
Minsky \cite[Section 4]{ELC1} for a complete discussion.

If $\alpha$ is a simple closed curve on $S$,  and $\nu$ and $\mu $ are ending invariants in $\mathcal E(S)$,
we define
$$\mm_\alpha(\nu,\mu)=\max\left\{
\frac{1}{\ell_\alpha(\nu)},\frac{1}{\ell_\alpha(\mu)},
\sup_{\alpha\subset\boundary W} d_W(\nu,\mu)\right\}$$
where  
$$d_W(\nu,\mu)={\rm diam}_{\mathcal C(W)}(\pi_W(\nu)\union\pi_W(\mu))$$
and the supremum in the final term is taken over all  essential subsurfaces $W$ with $\alpha$
in their boundary. Here if $\nu$ is an ending invariant on $S$, we define
$1/\ell_\alpha(\nu)$ as follows: 
If $\alpha$ lies in a component
of the complement of the parabolic locus $p_\nu$ of $\nu$ which admits a conformal structure,
$\ell_\alpha(\nu)$ is the length of
the geodesic representative of $\alpha$ in the associated hyperbolic metric. If
$\alpha$ is a component of $p_\nu$
 we define  $\frac{1}{\ell_\alpha(\nu)}=+\infty$. In the remaining case, $\alpha$ either lies in a
 component of the complement of $p_\nu$ associated to a geometrically infinite end or intersects $p_\nu$ non-trivially, and we set
$\frac{1}{\ell_\alpha(\nu)}=0$.

The Length Bound Theorem from \cite{ELC2} shows that the end invariants coarsely determine the
set of ``short'' curves in $N_\rho$ and their length. The following simplified version of the Length Bound
Theorem is a mild generalization of Theorem 2.2 in our previous paper \cite{nobumping}. 

\begin{theorem}{KGCC results} {\rm (\cite{ELC2})}
Suppose that $S$ is a compact, oriented, hyperbolic surface and $\rho\in AH(S)$.
\begin{enumerate}
\item
Given $\delta>0$, there exists $K=K(\delta,S)$, so that if 
\hbox{$\mm_\alpha(\nu_+(\rho),\nu_-(\rho))>K$}, then $\ell_\alpha(\rho)<\delta$.
\item
Given $K>0$, there exists $\epsilon=\epsilon(K,S)>0$ so that if
$\ell_\alpha(\rho)<\epsilon$, then $\mm_\alpha(\nu_+(\rho),\nu_-(\rho))>K$.
\end{enumerate}
\end{theorem}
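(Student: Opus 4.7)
The plan is to reduce both assertions to the Length Bound Theorem of \cite{ELC2}, splitting on which of the three terms in the defining maximum for $\mm_\alpha(\nu_+(\rho),\nu_-(\rho))$ is responsible. The only novelty relative to Theorem~2.2 of \cite{nobumping} is the packaging together of the boundary length terms $1/\ell_\alpha(\nu_\pm(\rho))$ with the subsurface projection terms, so most of the work is bookkeeping against the existing machinery.

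For direction (1), I would argue as follows. Suppose $\mm_\alpha(\nu_+(\rho),\nu_-(\rho)) > K$. Then at least one of the three contributing quantities exceeds $K$. If $d_W(\nu_+(\rho),\nu_-(\rho)) > K$ for some essential subsurface $W$ with $\alpha \subset \partial W$, the Length Bound Theorem of \cite{ELC2} directly provides the required threshold $K = K(\delta,S)$ guaranteeing $\ell_\alpha(\rho) < \delta$. If instead $1/\ell_\alpha(\nu_\pm(\rho)) > K$, I split on the convention defining this quantity: when $\alpha$ lies in the parabolic locus of $\nu_\pm(\rho)$ we have $\ell_\alpha(\rho) = 0$ automatically, while when $\alpha$ is merely short in the conformal boundary hyperbolic metric, the collar lemma together with a standard boundary-to-interior length comparison (e.g.\ Bers' inequality) forces $\ell_\alpha(\rho) < \delta$ provided $K$ is taken large enough.

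For direction (2), I would run the contrapositive. Assume that none of the three terms in $\mm_\alpha(\nu_+(\rho),\nu_-(\rho))$ exceeds $K$. Then $\alpha$ is not parabolic on either side, both ending-invariant lengths are bounded below in terms of $K$, and all relevant subsurface projection diameters are at most $K$. The converse half of the Length Bound Theorem of \cite{ELC2} then yields a positive lower bound on $\ell_\alpha(\rho)$ depending only on $K$ and $S$, giving the required $\epsilon = \epsilon(K,S)$ by contraposition.

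The one place where I expect to have to be careful is confirming that the boundary-length comparison used in (1) is uniform across all configurations that can arise at $\nu_\pm(\rho)$, namely an ordinary conformal boundary component, an accidental parabolic, or a geometrically infinite end adjacent to $\alpha$; each case reduces either to the convention built into the definition of $1/\ell_\alpha(\nu)$ or to a standard estimate already embedded in the proof of the Length Bound Theorem, but the reader will need this verification spelled out. Everything else is a direct appeal to \cite{ELC2}.
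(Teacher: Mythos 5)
The paper does not actually prove this theorem: it is stated as a citation to \cite{ELC2}, with the surrounding text calling it ``a mild generalization of Theorem 2.2 in our previous paper \cite{nobumping}.'' Your sketch of how to extract the statement from the Length Bound Theorem of \cite{ELC2}---splitting on which term in the maximum defining $\mm_\alpha$ is large, using the fact that the coefficient in \cite{ELC2} is coarsely $\mm_\alpha$, and handling the boundary-length terms via Bers' inequality and the parabolic convention---is a faithful account of what the citation is implicitly invoking, so there is nothing essential to compare against.

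One imprecision worth flagging for a careful write-up: in direction (2) you say that if neither $1/\ell_\alpha(\nu_\pm(\rho))$ exceeds $K$ then ``both ending-invariant lengths are bounded below in terms of $K$.'' This is not literally correct under the convention used in this paper, since $1/\ell_\alpha(\nu) = 0$ is assigned in the case where $\alpha$ crosses the parabolic locus or sits inside a geometrically infinite block, and in those cases no hyperbolic length of $\alpha$ in $\nu$ is defined at all. The argument is unaffected---what matters is that a bounded $\mm_\alpha$ makes the relevant coefficient in the Length Bound Theorem bounded, which gives the lower bound on $\ell_\alpha(\rho)$---but the intermediate sentence should be rephrased so it does not assert a length lower bound in cases where no length is defined.
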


The results of \cite{BBCL} give a relatively complete picture of the relationship between the asymptotic
behavior of the ending invariants of a convergent sequence of Kleinian surface groups and the
geometry of the algebraic limit. We state  results from \cite{BBCL} in the simpler case where one
knows that a compact core of the algebraic limit embeds in the geometric limit.
Proposition \ref{convexembed} will assure us that we are always in this simpler case when considering
a sequence which converges to a quasiconformally rigid point.

Recall that a {\em complete marking} of $S$ is a maximal collection $\{\alpha_1,\ldots,\alpha_i\}$ 
of disjoint simple closed curves on $S$, together with a collection $\{\beta_1,\ldots,\beta_i\}$ of simple closed
curves such that $\beta_j$ is disjoint from $\alpha_i$ if $j\ne i$ and intersects $\alpha_j$ twice if $\alpha_j$ separates
$S-\cup_{i\ne j}\alpha_i$ and once if it does not separate.
If $\mu$ is a complete marking of $S$ and $W$ is an essential subsurface of $S$, then we may obtain
$\pi_W(\mu)\in\mathcal C(W)$ by surgering a curve in $\mu$ which intersects $W$ essentially with $\partial W$.
Again, the annulus case is slightly more complicated, see \cite[Section 5.1]{ELC1} for more details.
If $\nu\in\mathcal E(S)$ and $\mu$ is a complete marking on $S$, then we define
$$\mm_\alpha(\nu,\mu)=\max\left\{
\frac{1}{\ell_\alpha(\nu)},
\sup_{\alpha\subset\boundary W} d_W(\nu,\mu)
\right\}$$
where $d_W(\nu,\mu)=d_{\mathcal C(W)}(\pi_W(\nu),\pi_W(\mu))$
and the supremum in the final term is taken over all  essential subsurfaces $W$ with $\alpha$
in their boundary.

It is an elementary exercise to verify that $\mm_\alpha$ satisfies the
triangle inequality.

The following result combines  portions of the two main results of \cite{BBCL} in our simpler setting.
Notice that Lemmas 4.3 and 4.4 in \cite{BBCL} assure that we do not have to pass to a further
subsequence as in the statements of Theorems 1.1 and 1.2 in \cite{BBCL}. Moreover,  $\ell_\rho(\alpha)>0$ if and only if
$\{\mm_\alpha(\nu_+(\rho_n),\mu)\}$ and $\{\mm_\alpha(\nu_-(\rho_n),\mu)\}$
are both eventually bounded, and part (5) of
\cite[Thm. 1.2]{BBCL} guarantees that, in the  setting of Theorem \ref{BBCL result}, 
either $\{\mm_\alpha(\nu_+(\rho_n),\mu)\}$ or $\{\mm_\alpha(\nu_-(\rho_n),\mu)\}$
is eventually bounded.
Here we say a sequence $\{x_n\}$ in $[0,\infty]$ is {\em eventually bounded} if $\limsup \{x_n\}<\infty$.

\begin{theorem}{BBCL result}{\rm (\cite[Thm. 1.1, Thm. 1.2]{BBCL})}
Suppose that $\{\rho_n\}$ is a sequence in $AH(S)$ converging to
$\rho\in AH(S)$,  $\{\rho_n(\pi_1(S))\}$ converges geometrically to $\hat\Gamma$
and there is a compact core for $N_\rho$ that embeds in $\widehat N={\bf H}^3/\hat\Gamma$.
If $\alpha$ is an upward-pointing parabolic for $\rho$ and $\mu$ is a complete marking on $S$, then 
$\{\mm_\alpha(\nu_-(\rho_n),\mu)\}$ is eventually bounded.
\end{theorem}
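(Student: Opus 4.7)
The plan is to derive this statement as a direct specialization of the main theorems of \cite{BBCL}, using the embedding hypothesis to pin down the correct side on which the parabolic $\alpha$ shortens. First I would use the hypothesis that the compact core for $N_\rho$ embeds in $\widehat N$ under the covering map determined by the inclusion $\rho(\pi_1(S))\subset\hat\Gamma$: this identifies the upward-pointing end structure of $N_\rho^0$ with a definite region of $\widehat N$, so that the rank-one cusp of $N_\rho$ corresponding to $\alpha$ sits unambiguously on the upward-pointing side of the embedded core. In other words, the designation of $\alpha$ as upward-pointing in the algebraic limit is faithfully recorded in the geometric limit.

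Next I would apply Theorems 1.1 and 1.2 of \cite{BBCL} to the convergent sequence $\{\rho_n\}$. As noted in the remark preceding the statement, Lemmas 4.3 and 4.4 of \cite{BBCL} allow us to work with the entire sequence rather than extracting a subsequence, because the compact core of the algebraic limit embeds in the geometric limit. The key dichotomy is part (5) of \cite[Thm. 1.2]{BBCL}, which asserts that for each parabolic $\alpha$ of $\rho$, at least one of $\{\mm_\alpha(\nu_+(\rho_n),\mu)\}$ and $\{\mm_\alpha(\nu_-(\rho_n),\mu)\}$ is eventually bounded, and that when the other fails to be bounded, the unbounded side is precisely the side of the embedded core on which the cusp of $\alpha$ lies.

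Combining these ingredients, since $\alpha$ is by hypothesis an upward-pointing parabolic for $\rho$, the cusp of $\alpha$ lies on the upward side of the embedded core. The BBCL dichotomy therefore allows only $\{\mm_\alpha(\nu_+(\rho_n),\mu)\}$ to be the unbounded sequence, forcing $\{\mm_\alpha(\nu_-(\rho_n),\mu)\}$ to be eventually bounded, as desired.

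The main obstacle in fleshing this out is the bookkeeping for the side assignment in part (5) of \cite[Thm. 1.2]{BBCL}: one must verify that the combinatorial notion of an ``upward-pointing'' parabolic (defined via the marking $h_\rho$ on the relative compact core) is consistent with the side of the embedded core in $\widehat N$ on which short geodesics of $N_{\rho_n}$ accumulate. This is precisely where the hypothesis that a compact core for $N_\rho$ embeds in $\widehat N$ is indispensable: without it, a downward-pointing short curve in an approximant could conceivably limit to the upward-pointing cusp of $\rho$ via a non-embedded piece of the geometric limit, and the side assignment would no longer be well-defined.
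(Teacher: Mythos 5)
Your proof is correct and takes essentially the same route as the paper: the statement is treated as a direct specialization of Theorems 1.1 and 1.2 of \cite{BBCL}, with Lemmas 4.3 and 4.4 of \cite{BBCL} invoked to avoid extracting subsequences, and part (5) of \cite[Thm.\ 1.2]{BBCL} supplying the dichotomy that at least one of $\{\mm_\alpha(\nu_\pm(\rho_n),\mu)\}$ is eventually bounded. Your observation that the embedding hypothesis ensures the upward-pointing designation of $\alpha$ is faithfully recorded in the geometric limit, so that the unbounded side must be $\nu_+$, is exactly the role the hypothesis plays in the paper's gloss of the BBCL results.
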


\medskip

The following result from \cite{pull-out} allows us to control the development of geometrically
infinite ends and the resulting ending laminations.

\begin{theorem}{endlams in limit}{\rm (\cite[Thm. 1.3]{pull-out})}
Suppose that $\{\rho_n\}$ is a sequence in $AH(S)$ converging to $\rho\in AH(S)$.
If $W \subseteq S$ is an essential subsurface of $S$, other than an annulus or a pair of pants,
and $\lambda\in\EL(W)$ is a lamination supported on $W$, the following statements are
equivalent:
\begin{enumerate}
\item
$\lambda$ is a component of $\nu_+(\rho)$.
\item
$\{\pi_W(\nu_+(\rho_n))\}$ converges to $\lambda$.
\end{enumerate}
\end{theorem}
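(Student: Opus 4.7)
The plan is to establish both implications through the interplay of algebraic convergence, continuity of translation length, and the length-bound machine of Theorem \ref{KGCC results}.

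For the forward direction $(1)\Rightarrow(2)$, I would assume $\lambda\in\EL(W)$ is the upward ending lamination of a geometrically infinite $W$-end $E$ of $N_\rho$. By the definition of ending lamination, choose simple closed curves $\gamma_k$ on $W$ with $\gamma_k\to\lambda$ in $\overline{\CC(W)}$ whose geodesic representatives $\gamma_k^*$ in $N_\rho$ escape $E$. Passing to a geometric limit $\hat\Gamma$ of $\rho_n(\pi_1(S))$ containing $\rho(\pi_1(S))$, and using continuity of length together with a bilipschitz approximation of a large submanifold of $N_\rho$ by a submanifold of $N_{\rho_n}$, each $\gamma_k^*$ is realized in $N_{\rho_n}$ (for $n$ large) by a geodesic occupying essentially the same position deep in the upward end region. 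A standard consequence of Minsky's hierarchy and pleated surface machinery is that such a position forces $d_W(\gamma_k,\pi_W(\nu_+(\rho_n)))$ to be uniformly bounded in terms of the topology of $S$. A diagonal argument then gives $\pi_W(\nu_+(\rho_n))\to\lambda$ in $\overline{\CC(W)}$.

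For the reverse direction $(2)\Rightarrow(1)$, I would choose simple closed curves $\alpha_k\to\lambda$ in $\overline{\CC(W)}$. The strategy is to produce an infinite set of short curves on $W$ in $N_\rho$ accumulating onto $\lambda$, thereby forcing a geometrically infinite $W$-end with ending lamination $\lambda$. Using the hypothesis, select $n_k\to\infty$ with $d_W(\alpha_k,\pi_W(\nu_+(\rho_{n_k})))\to 0$. Provided the downward projections $\pi_W(\nu_-(\rho_{n_k}))$ remain bounded in $\CC(W)$, we obtain $d_W(\alpha_k,\pi_W(\nu_-(\rho_{n_k})))\to\infty$, so $\mm_{\alpha_k}(\nu_+(\rho_{n_k}),\nu_-(\rho_{n_k}))\to\infty$, and Theorem \ref{KGCC results}(1) yields $\ell_{\rho_{n_k}}(\alpha_k)<\delta$. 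Continuity of translation length under $\rho_n\to\rho$ then gives $\ell_\rho(\alpha_k)\le\delta$ for each $k$. The resulting family of short curves $\alpha_k$ on $W$ in $N_\rho$, accumulating at $\lambda$, exhibits a geometrically infinite end of $N_\rho$ with support $W$ and ending lamination $\lambda$. That this end is upward, rather than downward, is detected by tracking on which side of suitable pleated surfaces realizing the $\alpha_k$ these curves accumulate, a choice controlled by the hypothesis constraining only $\nu_+(\rho_n)$.

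The main obstacle is the case in which $\pi_W(\nu_-(\rho_n))$ also accumulates at $\lambda$, so the direct $\mm_{\alpha_k}$ argument breaks down and one cannot immediately extract short curves. Handling this case requires pleated-surface and hierarchy-based arguments: one considers sequences of pleated surfaces in $N_{\rho_n}$ realizing the $\alpha_k$, tracks their position relative to a fixed compact core using geometric convergence, and passes to pleated surfaces in $N_\rho$ exhibiting the upward $W$-end as geometrically infinite with ending lamination $\lambda$. Making this precise is the technical heart of the proof and draws on the full strength of the ELC theorem together with detailed control of pleated surface convergence.
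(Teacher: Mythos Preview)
This theorem is not proved in the present paper: it is quoted verbatim as Theorem~1.3 of \cite{pull-out} and used as a black box. There is therefore no proof here to compare your proposal against; the actual argument lives in \cite{pull-out} and relies on the machinery developed there (in particular Theorem~1.2 of that paper, which controls the set of bounded-length curves in $\CC(W)$ in terms of a geodesic joining the projections of the two end invariants).

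That said, your sketch has a genuine gap in the direction $(2)\Rightarrow(1)$. You write that continuity of translation length gives $\ell_\rho(\alpha_k)\le\delta$ from $\ell_{\rho_{n_k}}(\alpha_k)<\delta$, but continuity applies to a \emph{fixed} curve as $n\to\infty$; here both $\alpha_k$ and $n_k$ vary simultaneously, so nothing passes to the limit. To make this work one must, for each fixed $\alpha_k$, show $\ell_{\rho_n}(\alpha_k)$ is small for \emph{all} large $n$, not merely for one $n_k$. There is also a confusion in writing $d_W(\alpha_k,\pi_W(\nu_+(\rho_{n_k})))\to 0$: distances in $\CC(W)$ are nonnegative integers, and convergence of $\pi_W(\nu_+(\rho_n))$ to a boundary point $\lambda$ means these distances go to $+\infty$, not $0$, for any fixed curve $\alpha_k$. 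The relevant quantity $\mm_{\alpha_k}$ involves projections to subsurfaces with $\alpha_k$ in their boundary, not to $W$ itself, so the mechanism by which $\mm_{\alpha_k}$ becomes large is different from what you describe. Your identification of the hard case (when $\pi_W(\nu_-(\rho_n))$ also accumulates on $\lambda$) is correct, and your instinct that pleated-surface and geometric-limit arguments are needed there is sound, but the easy case is not yet nailed down either.
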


\section{Neighborhood systems for quasiconformally rigid points}

In this section, we produce a neighborhood system for a quasiconformally
rigid representation $\tau\in AH_0(M)$. 
We recall that  Theorem \ref{no bump} implies that  $\tau$ lies 
in the boundary of ${\rm int}(AH_0(M))$ and lies in the boundary of no other component of ${\rm int}(AH(M))$.

Let $\mathcal W_\tau$ denote the collection of components of $\partial_0M_\tau-P_\tau$
which are not thrice-punctured spheres.
(Here, and throughout, we identify $M_\tau$ with $M$ and hence $P_\tau$ with a collection of
incompressible annuli and tori in $\partial_0M$.)
Let $p_\tau$ denote the multicurve of cores of annulus components of $P_\tau$.
If $W\in \mathcal W_\tau$, since $\tau$ is quasiconformally rigid the
end invariant associated to $W$ is an ending lamination
$\lambda_W\in\partial_\infty\mathcal C(W)$. If $W\in \mathcal W_\tau$ is contained in a component $S$ of $\partial_0M$
and $\rho\in AH_0(M)$,
then $\pi_W(\nu(\rho))$ will denote $\pi_W(\nu(\rho)|_S)$ where $\nu(\rho)|_S$ is the restriction of
$\nu(\rho)$ to $S$.

Given $\delta>0$ and a collection $\mathbb U=\{U_W \}_{W\in \mathcal W_\tau}$  so that each $U_W$
is a neighborhood of
$\lambda_W\in\partial_\infty\mathcal C(W)$ in $\ \mathcal C(W)$, we define
$\mathcal U(\delta,\mathbb U,\tau)$  to be the set of all \hbox{$\rho\in {\rm int}(AH_0(M))$} such that
\begin{enumerate}
\item
$\ell_\rho(\alpha)<\delta$ if $\alpha\in p_\tau$, and
\item
$\pi_{W}(\nu(\rho))\in U_W\in\mathbb U$ if $W\in \mathcal W_\tau$.
\end{enumerate}
If $\tau$ is  a maximal cusp, meaning that $p_\tau$ is a pants
decomposition of $\partial_0M$, then $\WW_\tau=\emptyset$ and hence $\mathbb U = \emptyset$, and we also write
$\UU(\delta,\tau) = \UU(\delta,\emptyset,\tau)$.

If $\rho\in AH(M)$, then $\bar\rho\in AH(M)$ is obtained from $\rho$ by complex conjugation, i.e.
conjugation by $z\to \bar z$. Notice that $N_{\bar\rho}$ is simply $N_\rho$ with the opposite orientation.
One may check that $\rho=\bar\rho$ if and only if $M$ is an interval bundle and $\rho$ is virtually Fuchsian
i.e. $\rho(\pi_1(M))$ has a finite index subgroup conjugate into ${\rm PSL}(2,\mathbb R)$ (but we will not
use this fact).

If $M$ is an interval bundle, then there exists an orientation-reversing involution $\iota_M:M\to M$ which
preserves each fiber and is homotopic, but not isotopic, to the identity. In this case,  $AH(M)=AH_0(M)$, and  if $\nu$ is the end invariant for
$\rho\in AH(M)$, then $i_M(\nu)$ is the end invariant for $\bar\rho$.

The following proposition is the main result of this section, stating
that sets of the form $\UU(\delta,\mathbb U,\tau)$ form a neighborhood
system for $\tau$ (in the case that $M$ is an interval bundle and $\tau$ is a maximal cusp,
we actually obtain a neighborhood system for the pair $\{\tau,\bar\tau\}$). 

\begin{proposition}{nbhd system}
Suppose that  $M$ has incompressible boundary and $\tau$ is a quasiconformally rigid representation in $AH_0(M)$.
If $M$ is not an interval bundle or $\mathcal W_\tau$ is non-empty,
then
the collection of sets of the form  $\mathcal U(\delta,\mathbb U, \tau)$ is the intersection 
of a local neighborhood system for $\tau$ in $AH(M)$ with ${\rm int}(AH_0(M))$.

If $M$ is an interval bundle and $\mathcal W_\tau$ is empty, then 
the collection of sets of the form  $\mathcal U(\delta,\tau)$ is the intersection with ${\rm int}(AH_0(M))$
of a local neighborhood system for $\{\tau,\bar\tau\}$ in $AH(M)$.
\end{proposition}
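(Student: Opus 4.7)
The plan is to establish both directions of the neighborhood-basis characterization: (a) if $\{\rho_n\}\subset\mathrm{int}(AH_0(M))$ converges to $\tau$ in $AH(M)$, then $\rho_n\in\UU(\delta,\mathbb U,\tau)$ eventually for every choice of $\delta$ and $\mathbb U$; and (b) if $\{\rho_n\}\subset\mathrm{int}(AH_0(M))$ lies in $\UU(\delta_n,\mathbb U_n,\tau)$ with $\delta_n\to 0$ and the neighborhoods in $\mathbb U_n$ shrinking to $\{\lambda_W\}_{W\in\WW_\tau}$, then $\{\rho_n\}$ subsequentially converges to $\tau$ (or to $\bar\tau$ in the interval-bundle / maximal-cusp case, where no ending lamination is available to distinguish the two).

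Direction (a) is the easier of the two. Continuity of translation length on $AH(M)$ gives $\ell_{\rho_n}(\alpha)\to\ell_\tau(\alpha)=0$ for each $\alpha\in p_\tau$, so condition~(1) in the definition of $\UU(\delta,\mathbb U,\tau)$ holds for large $n$. For condition~(2), fix $W\in\WW_\tau$ inside a boundary component $S$ of $\partial_0 M$. Then $\rho_n|_{\pi_1(S)}\to\tau|_{\pi_1(S)}$ in $AH(S)$, and with the orientation convention for which $\nu(\rho)|_S=\nu_+(\rho|_{\pi_1(S)})$, the hypothesis that $\lambda_W$ is a component of $\nu_+(\tau|_{\pi_1(S)})$ combined with Theorem~\ref{endlams in limit} yields $\pi_W(\nu(\rho_n))\to\lambda_W$. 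Hence $\pi_W(\nu(\rho_n))\in U_W$ eventually.

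For direction (b), the first task is to extract a convergent subsequence. The uniform length and subsurface-projection bounds encoded in $\UU(\delta_n,\mathbb U_n,\tau)$ are precisely the input needed for Lecuire's compactness theorem from \cite{lecuire-masurdomain} (Theorem~\ref{doublyincomp}), so after passing to a subsequence $\{\rho_n\}$ converges to some $\rho\in AH(M)$. By continuity, each $\alpha\in p_\tau$ is parabolic for $\rho$. Passing to a further subsequence along which $\{\rho_n(\pi_1(M))\}$ converges geometrically to some $\hat\Gamma$, Proposition~\ref{convexembed} places us in the regime where a compact core of $N_\rho$ embeds in the geometric limit; one then applies Theorem~\ref{BBCL result} and Theorem~\ref{endlams in limit} on each surface-group restriction to conclude that each $\lambda_W$ is the ending lamination of the end of $N_\rho^0$ corresponding to $W$.

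These data assemble into a homotopy equivalence from the relative compact core of $N_\tau^0$ to one of $N_\rho^0$ that matches parabolic locus with parabolic locus and ending lamination with ending lamination. By \cite[Prop.~8.1]{canary-hersonsky} this homotopy equivalence is homotopic to an orientation-preserving homeomorphism, and the Ending Lamination Theorem~\cite{ELC2} then forces $\rho=\tau$ in $AH(M)$. In the interval-bundle case with $\WW_\tau=\emptyset$, the homeomorphism is only determined up to the fiber-reversing involution $\iota_M$, producing the $\{\tau,\bar\tau\}$ ambiguity described in the statement. I expect the main obstacle to lie in direction (b): verifying that the length and projection control provided by $\UU(\delta_n,\mathbb U_n,\tau)$ feeds simultaneously into Lecuire's compactness hypotheses on all boundary surfaces, and that the per-surface outputs of \cite{BBCL} and \cite{pull-out} assemble into a single homotopy equivalence respecting the global end-invariant data of $\tau$.
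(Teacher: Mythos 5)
Direction (a) of your argument matches the paper's Lemma \ref{conv in nbhd} exactly. Direction (b), however, has two genuine gaps and one misdirected citation.

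First, you assert that the data encoded in $\UU(\delta,\mathbb U,\tau)$ is ``precisely the input needed'' for Lecuire's compactness theorem, but Theorem \ref{doublyincomp} requires a sequence of measured laminations $\{\mu_n\}$ with $\ell_{\mu_n}(\rho_n)$ bounded, converging to a \emph{doubly incompressible} lamination whose support is $\lambda_\tau = p_\tau\cup\bigcup_W\lambda_W$, with a Hausdorff-convergence condition on supports. None of this is immediate from the definition of $\UU(\delta,\mathbb U,\tau)$. The paper must prove two substantial lemmas here: Lemma \ref{end lam doubly} (that $\lambda_\tau$ is the support of a doubly incompressible lamination, via a characteristic-submanifold argument ruling out essential annuli and M\"obius bands with small intersection with $\lambda_\tau$), and Lemma \ref{conv to end lam} (constructing $\mu_n = p_\tau\cup\bigcup_W c_n^W/r_n^W$ with bounded $\rho_n$-length, using pleated surfaces, the short-curve control of \cite[Thm.~1.2]{pull-out}, and Klarreich's identification of $\partial_\infty\CC(W)$). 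You flag this as a possible obstacle but do not resolve it; it is where most of the work lies.

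Second, once a subsequential limit $\rho$ exists, your appeal to Proposition \ref{convexembed} is circular --- that proposition requires $\rho$ to already be known to be quasiconformally rigid, which is essentially what you are trying to establish --- and Theorem \ref{BBCL result} concerns bounding $\mm_\alpha(\nu_-(\rho_n),\mu)$ for upward parabolics, not identifying ending laminations; neither is used in the paper's proof of this proposition (they appear instead in Proposition \ref{preliminary bounds}). More importantly, applying Theorem \ref{endlams in limit} to $\rho^S=\rho|_{\pi_1(S)}$ only tells you that $\lambda_W$ is a component of $\nu_+(\rho^S)$, i.e.\ an ending lamination of the surface-group cover $N_{\rho^S}$. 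Passing from that cover to $N_\rho$ itself requires knowing that the Covering-Theorem map from a neighborhood of the corresponding end of $N_{\rho^S}$ to an end-neighborhood of $N_\rho$ is degree one, not merely finite-to-one; the paper's Lemma \ref{no finite cover} (Covering Theorem plus Johannson's classification of characteristic submanifolds, ruling out a nontrivial finite cover of the base surface) handles exactly this. Your proposal elides it, so the assembly of the homotopy equivalence $j:M_\tau\to M_\rho$ satisfying the Canary--Hersonsky hypotheses is not actually established.
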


It suffices to prove that a sequence $\{\rho_n\}$ in ${\rm int}(AH_0(M))$
converges to $\tau$ (or accumulates on $\{\tau,\bar\tau\}$ when $M$ is an interval bundle and $\mathcal W_\tau$ is empty)
if and only if it is eventually contained in any set of the form $\mathcal U(\delta,\mathbb U,\tau)$. 

One direction follows easily from Theorem \ref{endlams in limit}.  Notice that if $M$ is an interval bundle and
$\mathcal W_\tau$ is empty, then $\mathcal U(\delta,\tau)=\mathcal U(\delta,\bar\tau)$ for all $\delta>0$.

\begin{lemma}{conv in nbhd}
Suppose that  $M$ has incompressible boundary,
$\tau$ is a quasiconformally rigid representation in $AH_0(M)$ and
$\{\rho_n\} \subset {\rm int}(AH_0(M))$ converges to $\tau$.
If $\delta>0$ and $\mathbb U=\{U_W \}_{W\in \mathcal W_\tau}$ is a collection of neighborhoods
of the ending laminations of $N_\tau^0$,
then $\rho_n$ is contained in $\mathcal U(\delta,\mathbb U,\tau)$ for all sufficiently large $n$.
\end{lemma}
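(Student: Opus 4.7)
My plan is to verify the two defining conditions of $\mathcal U(\delta,\mathbb U,\tau)$ separately, treating the length condition by continuity of $\ell_{(\cdot)}(\alpha)$ on $AH(M)$ and the projection condition by a direct appeal to Theorem \ref{endlams in limit}.

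For condition (1), I would note that for each $\alpha\in p_\tau$ the element $\tau(\alpha)$ is parabolic, so $\ell_\tau(\alpha)=0$. The translation length of a fixed curve depends continuously on the representation in $AH(M)$, and $p_\tau$ is a finite collection of curves, so the inequality $\ell_{\rho_n}(\alpha)<\delta$ holds simultaneously for all $\alpha\in p_\tau$ once $n$ is large enough.

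For condition (2), I would fix $W\in\mathcal W_\tau$ and let $S$ be the component of $\partial_0 M$ containing $W$. Since $M$ has incompressible boundary, $\pi_1(S)$ injects into $\pi_1(M)$, and the restrictions $\rho_n^S:=\rho_n|_{\pi_1(S)}$ lie in $AH(S)$ and converge to $\tau^S:=\tau|_{\pi_1(S)}$. With the orientation conventions fixed in the excerpt, $\nu_+(\rho^S)$ is precisely the restriction $\nu(\rho)|_S$, so by definition $\pi_W(\nu(\rho_n))=\pi_W(\nu_+(\rho_n^S))$. Since $\tau$ is quasiconformally rigid, the end of $N_\tau^0$ bounded by $W$ is geometrically infinite with ending lamination $\lambda_W\in\partial_\infty\mathcal C(W)$, and hence $\lambda_W$ appears as a component of $\nu_+(\tau^S)$ supported on $W$. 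Theorem \ref{endlams in limit}, applied with $\rho_n^S\to\tau^S$ and the subsurface $W\subseteq S$, then yields $\pi_W(\nu_+(\rho_n^S))\to\lambda_W$ in $\mathcal C(W)\cup\partial_\infty\mathcal C(W)$. Consequently $\pi_W(\nu(\rho_n))$ eventually lies in any preassigned neighborhood $U_W$ of $\lambda_W$. Finiteness of $\mathcal W_\tau$ then produces a single threshold that works uniformly for all $W\in\mathcal W_\tau$, and combining with the bound from (1) gives $\rho_n\in\mathcal U(\delta,\mathbb U,\tau)$ for all large $n$.

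The only genuine bookkeeping step is identifying the ending lamination $\lambda_W$ of $\tau$ with a component of $\nu_+(\tau^S)$ for the restriction $\tau^S$; this is immediate from the definition of the ending invariant, since the end of $N_\tau^0$ bounded by $W$ is covered by an end of the surface group cover $N_{\tau^S}^0$ bounded by a lift of $W$ with the same ending lamination. I do not expect any substantive obstacle here — this is the easy direction of Proposition \ref{nbhd system}, with the converse (requiring Lecuire's compactness theorem \ref{doublyincomp}, the embedding result of Anderson--Canary--Culler--Shalen, and the Ending Lamination Theorem) being where the real work lies.
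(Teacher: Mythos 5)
Your argument is correct and mirrors the paper's proof exactly: continuity of translation length handles the $p_\tau$ condition, and restricting to $\rho_n^S = \rho_n|_{\pi_1(S)}$ and applying Theorem \ref{endlams in limit} handles the projection condition, with finiteness of $p_\tau$ and $\mathcal W_\tau$ giving a uniform threshold. The extra remark identifying $\lambda_W$ with a component of $\nu_+(\tau^S)$ is implicit in the paper's parenthetical about the orientation convention.
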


\begin{proof}
If $\alpha\in p_\tau$, then $\ell_\alpha(\tau)=0$,
so, since $\lim\ell_{\alpha}(\rho_n)=\ell_\alpha(\tau)$, \hbox{$\ell_{\alpha}(\rho_n)<\delta$} for all sufficiently
large $n$. Fix $W\in \mathcal W_\tau$ and let $S$ be the component of
$\partial M$ containing $W$. Let $\rho_n^S=\rho_n|_{\pi_1(S)}$ and $\tau^S=\tau|_{\pi_1(S)}$.
Since $\{\rho_n^S\}$ converges to $\tau^S$ in $AH(S)$,
and the geometrically infinite end associated to $W$ is upward-pointing,
Theorem \ref{endlams in limit} implies that $\{\pi_W(\nu(\rho_n))\}$ converges to $\lambda_W$. 
(Recall that with this convention $\nu_+(\rho_n^S)$ is the restriction of $\nu(\rho_n)$ to $S$.)
In particular, 
$\pi_W(\nu(\rho_n))$ is contained in $U_W\in\mathbb U$ for all sufficiently large $n$. 
Therefore, $\rho_n$ is contained in $\mathcal U(\delta,\mathbb U,\tau)$ for all sufficiently large $n$.
\end{proof}

In order to establish the other direction of our claim, we will need the following (slight generalization of a) criterion for
convergence due to Bonahon and Otal \cite[Lem. 14]{bonahon-otal}. This criterion is a common generalization 
of Thurston's Double Limit Theorem \cite{thurstonII} and Relative Boundedness Theorem \cite[Thm 3.1]{thurstonIII}. 
It was generalized to manifolds with compressible boundary by Kleineidam-Souto \cite{kleineidam-souto} and 
Lecuire \cite[Thm. 6.6]{lecuire-masurdomain}. We will explain how to modify the proof of \cite[Thm. 6.6]{lecuire-masurdomain} 
to obtain the precise statement we need.

A measured lamination $\mu$ on $\partial_0 M$ is  {\em doubly incompressible}
if there exists $c>0$ such that if $E$ is an essential annulus or M\"obius band in $M$, then 
\hbox{$i(\partial A,\mu)\ge c$}.  (Recall that a properly embedded annulus or M\"obius band is {\em essential} if it
is $\pi_1$-injective and can not be properly homotoped into the boundary of $M$.)

\begin{theorem}{doublyincomp}
Let $M$ be a compact, orientable, hyperbolizable 3-manifold with incompressible boundary. Suppose that $\{\rho_n\}$ is a
sequence in $AH(M)$ and there exists a sequence $\{\mu_n\}$ of measured laminations on $\partial_0 M$ such that
\begin{enumerate}
\item $\{\ell_{\mu_n}(\rho_n)\}$ is bounded,
\item $\{\mu_n\}$ converges to a doubly incompressible measured lamination $\mu$ on $\partial_0 M$, and
\item the supports of $\{\mu_n\}$ Hausdorff converge to a geodesic lamination $L$ and every component $L_0$  of $L$
contains a unique minimal sublamination $L_0^m$. 
\end{enumerate}
Then $\{\rho_n\}$ has a convergent subsequence.
\end{theorem}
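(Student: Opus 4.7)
The plan is to argue by contradiction, following the template of Bonahon--Otal \cite{bonahon-otal} and its extensions by Kleineidam--Souto \cite{kleineidam-souto} and Lecuire \cite[Thm.~6.6]{lecuire-masurdomain}. Suppose that no subsequence of $\{\rho_n\}$ converges in $AH(M)$. Then, after passing to a subsequence, the Bestvina--Paulin construction applied to the discrete faithful representations $\rho_n$ produces scaling constants $\lambda_n\to\infty$ and a nontrivial minimal small action of $\pi_1(M)$ on an $\R$-tree $T$, with translation length function $\ell_T$ characterized by $\ell_{\rho_n}(\gamma)/\lambda_n\to\ell_T(\gamma)$ for every $\gamma\in\pi_1(M)$. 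Smallness reflects discreteness and faithfulness of the $\rho_n$, together with the incompressibility of $\partial M$.

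The first key step is to leverage hypotheses (1), (2), (3) to conclude that $\ell_T(\mu)=0$. Since $\ell_{\rho_n}(\mu_n)$ is bounded by (1), one has $\ell_{\rho_n}(\mu_n)/\lambda_n\to 0$. Lower semicontinuity of translation length under weak convergence of measured laminations, applied to $\mu_n\to\mu$, then gives $\ell_T(\mu)\le \liminf \ell_{\rho_n}(\mu_n)/\lambda_n=0$. The role of hypothesis (3) is to make this semicontinuity applicable in a usable form: because the supports of the $\mu_n$ Hausdorff converge to $L$ and every component of $L$ has a unique minimal sublamination, the support of $\mu$ is carried by this union of minimal sublaminations, so no mass of $\mu$ can ``escape'' into non-minimal pieces of $L$ in the limit. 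This is the technical point adapted from \cite[Section 6]{lecuire-masurdomain}.

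The second main step invokes a Skora-type duality theorem: because $M$ has incompressible boundary and $\pi_1(M)$ acts minimally and smally on $T$, the action is dual to a measured geodesic lamination $\beta$ on $\partial_0M$ in the sense that $\ell_T(\alpha)=i(\alpha,\beta)$ for every simple closed curve $\alpha\subset\partial_0M$. Hence $i(\mu,\beta)=0$. Combined with a $\pi_1$-injective realization of $\beta$ inside $M$ furnished by the tree action, the Bonahon--Otal endgame then produces an essential annulus or M\"obius band $E\subset M$ with $i(\partial E,\mu)=0$, directly contradicting the doubly incompressible hypothesis on $\mu$ and completing the argument.

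The main obstacle is the first key step. The Hausdorff convergence and unique minimal sublamination hypotheses in (3) must be combined carefully to pass from the rescaled length bound on $\mu_n$ to the vanishing of $\ell_T(\mu)$, ruling out the situation where $\mu$ sits inside $L$ but $T$ nevertheless records positive translation length on an extra minimal component that appears only in the Hausdorff limit. Once that is in place, the remaining Skora-duality and annulus-construction steps are essentially a transcription of the incompressible-boundary argument of \cite{bonahon-otal} and the corresponding portion of \cite[Thm.~6.6]{lecuire-masurdomain} to the present setting.
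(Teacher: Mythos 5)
Your high-level plan (contradiction via Bestvina--Paulin degeneration to a small minimal $\R$-tree action, then use the structure of that action against the hypotheses) is the same Bonahon--Otal/Lecuire template the paper follows. But you have reorganized the argument in a way that leaves the real work unaddressed, and several of the intermediate claims are misstated.

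The paper's argument, following \cite[Prop.~6.6]{lecuire-masurdomain}, runs in the \emph{opposite} logical order from yours. It first uses double incompressibility together with Lecuire's realization result \cite[Prop.~6.1]{lecuire-masurdomain} to produce a component $\mu_0$ of $\mu$ that \emph{is} realized in $\TT$; then it uses hypothesis~(3) together with Otal's \cite[Cor.~3.1.3, Thm.~3.1.4]{otal} to upgrade this to realization of the Hausdorff-limit component $L_0 \supset \operatorname{supp}(\mu_0)$; then Lecuire's version of Otal's Continuity Theorem gives the \emph{blow-up} $\ell_{\gamma_n}(\rho_n)/\ell_{\gamma_n}(\sigma_0)\to\infty$ for multicurves $\gamma_n$ Hausdorff-converging to $L_0$, and hence $\ell_{\hat\mu_n}(\rho_n)\to\infty$ for sublaminations $\hat\mu_n\subset\mu_n$, contradicting~(1). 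Your argument instead wants to go from the length bound to $\ell_T(\mu)=0$, then to a dual lamination $\beta$ with $i(\mu,\beta)=0$, then to an essential annulus/M\"obius band. This is logically the contrapositive of the above chain, but it replaces the precise tools with claims that do not quite hold as stated.

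Three concrete gaps. First, the ``lower semicontinuity'' of translation length is not the available technical input. The Otal/Lecuire continuity theorem is an assertion that \emph{realized} laminations have rescaled lengths tending to infinity; it does not directly yield $\ell_T(\mu)\le\liminf\ell_{\rho_n}(\mu_n)/\lambda_n$, and there is no $a~priori$ definition of $\ell_T(\mu)$ for the $\pi_1(M)$-tree without first restricting to boundary subgroups and invoking Skora. Second, the claim that ``the action is dual to a measured geodesic lamination $\beta$ on $\partial_0M$'' is false for a $\pi_1(M)$-tree: Skora's theorem applies to surface groups, and the correct move (which the paper makes) is to restrict to the component $S$ of $\partial_0M$ containing $\mu_0$, take the minimal $\pi_1(S)$-subtree $\TT_S$, and apply Skora there; the $\pi_1(M)$-tree $\TT$ is governed by Morgan--Shalen, not Skora. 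Third, the step producing an essential annulus or M\"obius band with $i(\partial E,\mu)=0$ is exactly the content (in contrapositive form) of Lecuire's \cite[Prop.~6.1]{lecuire-masurdomain}, a substantial lemma that your sketch treats as routine ``endgame.'' Finally, your account of how hypothesis~(3) is used (``no mass can escape into non-minimal pieces'') does not capture its actual role: it is what allows one to pass from realization of $\mu_0$ to realization of the larger Hausdorff-limit component $L_0$ (by noting $L_0$ adds only finitely many isolated non-compact leaves to $\operatorname{supp}(\mu_0)$), and to extract sublaminations $\hat\mu_n\subset\mu_n$ whose supports Hausdorff-converge to $L_0$ so that the continuity theorem applies.

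So the approach is the right one, but the proposal needs to be restructured to cite the realization step \cite[Prop.~6.1]{lecuire-masurdomain}, pass to the boundary surface subgroup before invoking Skora, make precise the passage from $\mu_0$ to $L_0$ using hypothesis~(3) and Otal's realization results, and then invoke the continuity theorem in the blow-up direction rather than a semicontinuity that is not established.
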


\begin{proof}
We first show, given our assumptions on $\mu$ and $L$, that if \hbox{$\pi_1(M)\times \mathcal T\to\mathcal T$} is a small, minimal
action of $\pi_1(M)$ on an $\mathbb R$-tree $\mathcal T$, then some component $L_0$ of $L$ is realized in $\mathcal T$
(see Lecuire \cite{lecuire-masurdomain} and Otal \cite{otal} for the definitions of the terminology from the theory of $\mathbb R$-trees
used here.)
Lecuire  \cite[Prop. 6.1]{lecuire-masurdomain} showed that, since $\mu$ is doubly incompressible, some component  $\mu_0$ of $\mu$ 
is realized in $\mathcal T$. Let $S$ be the component of $\partial_0M$ containing $\mu_0$ and let 
$\mathcal{ T}_S$ be a minimal subtree of $\mathcal {T}$ invariant under the action of $\pi_1(S)\subset \pi_1(M)$.
By Skora's Theorem \cite{skora}, the action of $\pi_1(S)$ on $\mathcal{T}_S$ is dual to a measured geodesic lamination $\beta$ on $S$.
Since $\mu_0$ is realized in $\mathcal{T}$, and hence in $\mathcal{T}_S$, $\mu_0$ intersects $\beta$ transversely. 
Let $L_0$ be the component of $L$ containing the support of $\mu_0$. 
Assumption (3)  guarantees that $L_0$ is obtained from the support of $\mu_0$ by adding finitely many isolated non-compact leaves.
Results of Otal \cite[Corol. 3.1.3 and Thm. 3.1.4]{otal} then imply that $L_0$ is realized in $\mathcal{T}_S$ and hence in $\mathcal{T}$. 

One may now complete the proof of Theorem \ref{doublyincomp} exactly as in the proof of \cite[Prop. 6.6]{lecuire-masurdomain}.
If $\{\rho_n\}$ does not have a convergent subsequence, then there 
exists a subsequence converging  to a small, minimal action of $\pi_1(M)$ on a $\mathbb R$-tree $\mathcal T$
(see Morgan-Shalen \cite[Thm. II.4.7]{morgan-shalenI}).
Assumption (3) implies that there exist sublaminations $\{\hat\mu_n\}$ of $\{\mu_n\}$ so that $\{\hat\mu_n\}$ 
converges to $\mu_0$ and the supports of $\{\hat\mu_n\}$ Hausdorff converge to $L_0$.
Lecuire's version \cite[Thm. 6.5]{lecuire-masurdomain} of
Otal's Continuity Theorem (see \cite[Thm. 4.0.1]{otal} and \cite[Thm. 3.1]{otal-duke}), then implies, since $L_0$ is realized in $\mathcal T$,
that if $\{\gamma_n\}$ is a sequence of
multi-curves which Hausdorff converge to $L_0$ and $\sigma_0:\pi_1(S)\to {\rm PSL}(2,\mathbb R)$ is a fixed Fuchsian representation,
then 
$$\frac{\ell_{\gamma_n}(\rho_n)}{\ell_{\gamma_n}(\sigma_0)}\to\infty.$$
Since we may approximate each $\hat\mu_n$ arbitrarily closely by laminations supported on multicurves,
\hbox{$\ell_{\hat\mu_n}(\rho_n)\to\infty$}, 
which contradicts Assumption (1). This contradiction completes the proof.
\end{proof}

It is well-known that the union of the parabolic locus and the ending laminations of a quasiconformally rigid representation is
doubly incompressible, see for example the discussion in Anderson-Lecuire \cite[Section 2.9]{anderson-lecuire}. We
include a proof since we could not find a complete argument in the literature.

\begin{lemma}{end lam doubly}
Suppose that  $M$ has incompressible boundary and that $\tau$ is a quasiconformally rigid representation in $AH_0(M)$.
If $\mu$ is a measured lamination on $\partial_0 M$ whose support is 
$$\lambda_\tau=p_\tau \cup \bigcup_{W\in \mathcal W_\tau} \lambda_W $$
then $\mu$ is doubly incompressible.
\end{lemma}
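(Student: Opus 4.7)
The plan is to equip $\lambda_\tau$ with a measure, taking unit weight on each simple closed curve in $p_\tau$ and some choice of projective measure of unit mass on each filling lamination $\lambda_W$ for $W\in\mathcal{W}_\tau$, giving a measured lamination $\mu$ supported on $\lambda_\tau$. To verify double incompressibility, I would argue by contradiction: assume there is a sequence $\{E_n\}$ of essential annuli or M\"obius bands in $M$ with $i(\partial E_n,\mu)\to 0$, and write $\partial E_n=\alpha_n\cup\alpha_n'$.

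The first step is a topological classification of each $\alpha_n$. Since $\partial_0 M\setminus p_\tau$ decomposes as the disjoint union of the subsurfaces $W\in\mathcal{W}_\tau$ together with thrice-punctured spheres (which carry no non-peripheral simple closed curves), each $\alpha_n$ is isotopic either to a curve in $p_\tau$ or to a non-peripheral simple closed curve in some $W\in\mathcal{W}_\tau$. In the latter case $i(\alpha_n,\lambda_W)>0$ because $\lambda_W$ fills $W$; the assumed decay then forces $\alpha_n$ (after passing to a subsequence) to accumulate on a measured lamination supported on $\lambda_W$, and in particular the projective classes $[\alpha_n]$ converge to $\lambda_W$ in the Gromov boundary $\partial_\infty\mathcal{C}(W)$. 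The same analysis applies to $\alpha_n'$. Essentiality of $E_n$ provides a free homotopy $\alpha_n\simeq\alpha_n'$ in $M$, so $\tau(\alpha_n)$ and $\tau(\alpha_n')$ are conjugate in $\tau(\pi_1(M))$, and in particular $\ell_\tau(\alpha_n)=\ell_\tau(\alpha_n')$.

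The main obstacle, and where quasiconformal rigidity of $\tau$ must be leveraged, is to convert this topological degeneration into a contradiction. After extracting a further subsequence and normalizing, $\partial E_n$ converges in $\ML(\partial_0 M)$ to a measured lamination $\eta$ with support contained in $\lambda_\tau$ and $i(\eta,\mu)=0$. I would then invoke the techniques used in the proof of Theorem \ref{doublyincomp} --- Skora's theorem \cite{skora}, Morgan--Shalen's $\mathbb{R}$-tree compactification \cite{morgan-shalenI}, and Otal's realization/continuity theorems \cite{otal, otal-duke} in Lecuire's formulation \cite{lecuire-masurdomain} --- together with the characteristic submanifold decomposition of $M$, to produce a limiting essential annulus or M\"obius band in $M$ whose boundary freely identifies, in $\pi_1(M)$, components of $\lambda_\tau$ living in distinct components of $\partial_0 M\setminus p_\tau$ or joining distinct curves of $p_\tau$. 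Such an identification would force, via the discrete and faithful $\tau$, either two geometrically infinite ends of $N_\tau$ to share an ending lamination or two distinct rank-one cusps to coincide, contradicting the end-invariant data of the quasiconformally rigid $\tau$. The hypothesis that $M$ is not an interval bundle over the torus rules out the only exceptional configuration in which this final step would break down.
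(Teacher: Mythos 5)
Your opening moves are on the right track: arguing by contradiction with a sequence $\{E_n\}$ of essential annuli/M\"obius bands satisfying $i(\partial E_n,\mu)\to 0$, noting that $\partial E_n$ is eventually disjoint from $p_\tau$ (since $p_\tau$ carries positive atomic weight), and observing that a boundary component lying in some $W\in\mathcal{W}_\tau$ must be non-peripheral there and hence projectively accumulate on $\lambda_W$ because $\lambda_W$ fills $W$.

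The proof goes astray, however, at the decisive step. The $\mathbb{R}$-tree package you invoke (Skora, Morgan--Shalen, Otal's realization and continuity theorems, Lecuire's extension) is machinery for analyzing \emph{divergent sequences of representations} --- it is what powers the compactness statement in Theorem \ref{doublyincomp} --- and it plays no role whatsoever in proving that the specific lamination $\lambda_\tau$ is doubly incompressible. You cannot use it to ``produce a limiting essential annulus or M\"obius band.'' A sequence of essential annuli does not converge to an essential annulus; what converges (projectively) is $\partial E_n$, to a measured lamination $\lambda_\infty$ with $i(\lambda_\infty,\mu)=0$, and from that point one needs a genuinely topological argument to reach a contradiction. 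Moreover, the contradiction you predict --- ``two geometrically infinite ends share an ending lamination or two distinct rank-one cusps coincide'' --- is not how the impossibility actually manifests: it is perfectly consistent for two ends of a surface-group quotient to carry ending laminations with the same support \emph{a priori}; the non-trivial input is Thurston's and Ohshika's theorem that a doubly degenerate surface group cannot have $\nu_+=\nu_-$.

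The paper's actual argument is quite different after the initial reductions. It brings in the Jaco--Shalen--Johannson characteristic submanifold $\Sigma(M)$: after passing to a subsequence, every $E_n$ is isotopic to a \emph{vertical} annulus/M\"obius band in a fixed interval-bundle component $\Sigma_0$ of $\Sigma(M)$ (solid- and thickened-torus components only carry finitely many isotopy classes, so they are discarded). Taking the projective limit $\lambda_\infty$ of $\partial E_n$ in $PML(\partial_0\Sigma_0)$ and letting $G_0$ be the subsurface it fills, one uses the fiber-reversing involution $\iota_0$ of $\Sigma_0$ to make $G_0$ invariant; if $\partial G_0\neq\emptyset$ one obtains vertical essential annuli with $i(\partial A,\mu)=0$, already ruled out at the start. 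Otherwise $M=\Sigma_0$ is an interval bundle, $\lambda_\infty$ has no closed leaf, and by invariance under $\iota_0$ one deduces $\nu_+(\tau)=\nu_-(\tau)$, contradicting Thurston \cite[Prop.\ 9.3.7]{thurston-notes} / Ohshika \cite[Lem.\ 3.12]{ohshika-gtlimits}; the twisted case is handled by passing to the orientation double cover. These are the ingredients your sketch is missing, and without them the argument does not close.
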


\begin{proof} 
First recall that if $A$ is an essential annulus or M\"obius band in $M$, then
$\partial A\cap \partial_0M$ cannot be homotoped into $p_\tau$ (since the fundamental groups of
the cusps represent distinct conjugacy classes of maximal abelian subgroups).
Since every simple closed curve in a pair of pants is peripheral and each ending lamination
fills a component of $\partial_0M-p_\tau$, every essential annulus or
M\"obius band $A$ in $M$ must intersect the support
of $\mu$. In particular $i(\partial A,\mu)>0$.
So, if $\mu$ is not doubly incompressible, there exists a sequence $\{A_i\}$ of distinct essential annuli or M\"obius bands
in $M$ so that $\lim i(\partial A_i,\mu)=0$.

We recall that there is a proper compact submanifold $\Sigma(M)$ of $M$, called the {\em characteristic submanifold}, so that 
each component of $\Sigma(M)$ is either (a) an $I$-bundle $\Sigma_0$ over a compact surface $F_0$ 
of negative Euler characteristic whose
intersection $\partial_0\Sigma_0$ with $\partial_0M$ is the associated $\partial I$-bundle or 
(b) a solid or thickened torus such that each
component of its frontier is an essential annulus. Moreover, every essential annulus or M\"obius band $A$ in $M$ is isotopic
into $\Sigma$ and if $A$ is isotopic into a component of type (a), then it is isotopic to a {\em vertical} annulus or M\"obius band,
i.e. one which is a union of fibers of the bundle.
(See Jaco-Shalen \cite{jaco-shalen} or Johannson \cite{johannson} for the general theory of characteristic
submanifolds, and \cite[Section 5]{canary-mccullough} for a discussion of the theory in the restricted setting
of hyperbolizable 3-manifolds.)
We note that there are only finitely many isotopy classes of
essential annuli or M\"obius bands in a component of type (b), so we may pass to a subsequence so
that each $A_i$ is a vertical annulus or M\"obius band in an interval bundle component $\Sigma_0$ of $\Sigma(M)$.

Let $\lambda_\infty$ be the limit of $\partial A_i$ in the space $PML(\partial_0\Sigma_0)$ of projective  
measured laminations on $\partial_0\Sigma_0$. Notice that $i(\lambda_\infty,\mu)=0$. Let $G_0$ be 
an essential  subsurface of $\partial_0\Sigma_0=\Sigma_0\cap\partial M$ which contains $\lambda_\infty$
so that $\lambda_\infty$ fills $G_0$.
Let $\iota_0:\Sigma_0\to\Sigma_0$
be the orientation reversing involution of $\Sigma_0$ which preserves each fiber of the bundle. Since 
$\partial A_i=\iota(\partial A_i)$ for all $i$, we may assume that $G_0$ is also invariant under $\iota_0$.
If $\partial G_0$ is non-empty, then the components of $\partial G_0$ bound a collection of essential annuli
and M\"obius bands which are disjoint from $\mu$, which is impossible.

It remains to consider the case when $M=\Sigma_0$  and $G_0=\partial_0\Sigma_0=\partial M$. 
Notice that $\lambda_\infty$ cannot contain a closed leaf $c$, since then $c\ \cup\iota(c)$ would bound 
an essential annulus $A$ so that $i(\partial A,\mu)=0$. Therefore,  the restriction of $\lambda_\infty$ to each component
of $\partial_0M$  is filling, so
$\lambda_\infty$  must be the support of $\mu$. 
If $M$ is untwisted, then, since $\lambda_\infty$ is invariant under the involution $\iota$,
$\nu_+(\rho)=\nu_-(\rho)$, 
which is impossible (see Thurston \cite[Prop. 9.3.7]{thurston-notes} or 
Ohshika \cite[Lem. 3.12]{ohshika-gtlimits}). If $M$ is twisted, let $\widehat M$ be the double cover which is an untwisted interval
bundle. The pre-image $\hat\lambda_\infty$ 
of $\lambda_\infty$ in $\partial\widehat M$ is invariant under the fiber-preserving involution of $\widehat M$.
This is again impossible, since $\hat\lambda_\infty$ would be the end invariant of  the associated double cover $\widehat N_\tau$
of $N_\tau$.
\end{proof}

Next we construct a sequence of laminations whose $\rho_n$-length converges to zero and such that the laminations converge 
to a measured lamination whose support is the collection of ending invariants for $\tau$.

\begin{lemma}{conv to end lam}
Suppose that  $M$ has incompressible boundary,
$\tau$ is a quasiconformally rigid representation in $AH_0(M)$,
$\{\rho_n\} \subset {\rm int}(AH_0(M))$ and 
$\{\rho_n\}$ is eventually contained in any set of the form $\mathcal U(\delta,\mathbb U,\tau)$.
Then, after possibly passing to a subsequence, there exist measured laminations $\{\mu_n\}$ such
that  
\begin{enumerate}
\item $\{\ell_{\mu_n}(\rho_n)\}$ is bounded,
\item $\{\mu_n\}$ converges to a measured lamination $\mu$ with 
support $\lambda_\tau$, and
\item the supports of $\{\mu_n\}$ Hausdorff converge to a geodesic lamination $L$ such that
every component of $L$ contains the support of exactly one component of the support of $\mu$.
\end{enumerate}
\end{lemma}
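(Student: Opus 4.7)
The plan is to define
\[
\mu_n \;=\; \sum_{\alpha\in p_\tau}\alpha \;+\; \sum_{W\in\WW_\tau} c_n^W\,\gamma_n^W,
\]
a weighted sum in which each $\gamma_n^W\subset W$ is a simple closed curve of uniformly bounded $\rho_n$-length whose class in $\CC(W)$ approaches the ending lamination $\lambda_W$, while $c_n^W>0$ are positive scalings chosen so that $c_n^W\gamma_n^W$ has a non-trivial limit in $\ML(W)$ supported on $\lambda_W$. The hypothesis that $\{\rho_n\}\subset\operatorname{int}(AH_0(M))$ lies eventually in every $\UU(\delta,\mathbb U,\tau)$ unpacks into exactly two ingredients used below: $\ell_{\rho_n}(\alpha)\to 0$ for every $\alpha\in p_\tau$, and $\pi_W(\nu(\rho_n))\to\lambda_W$ in $\CC(W)\cup\boundary_\infty\CC(W)$ for every $W\in\WW_\tau$.

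The technical heart of the construction is the selection of the curves $\gamma_n^W$. Since each component of $\boundary W$ lies in $p_\tau$, we have $\ell_{\rho_n}(\boundary W)\to 0$; geometric finiteness of $\rho_n$ then forces these curves to be pinched on the conformal boundary $X_n^S$ of $\rho_n$, where $S$ is the component of $\boundary_0 M$ containing $W$, i.e.\ their hyperbolic lengths on $X_n^S$ tend to $0$. The collar lemma makes the Margulis collars of $\boundary W$ in $X_n^S$ arbitrarily wide, so for large $n$ every Bers-short closed curve of $X_n^S$ that essentially meets $W$ must be disjoint from $\boundary W$ and hence lie in $W$. Consequently $\pi_W(\nu(\rho_n))$ is represented by a simple closed curve $\gamma_n^W\subset W$ of hyperbolic length on $X_n^S$ bounded by a Bers constant depending only on the topology of $W$, and standard quasiconformal deformation theory for geometrically finite ends upgrades this to a uniform bound on $\ell_{\rho_n}(\gamma_n^W)$.

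By construction $\gamma_n^W\to\lambda_W$ in $\CC(W)\cup\boundary_\infty\CC(W)$. Klarreich's identification of $\boundary_\infty\CC(W)$ with $\EL(W)$, combined with compactness of $\PML(W)$, then furnishes, after passing to a subsequence, positive scalings $c_n^W\to 0$ with $c_n^W\gamma_n^W\to\mu^W\in\ML(W)$ and $\operatorname{supp}\mu^W=\lambda_W$. Setting $\mu=\sum_\alpha\alpha+\sum_W\mu^W$, condition (1) is immediate from
\[
\ell_{\mu_n}(\rho_n)=\sum_{\alpha\in p_\tau}\ell_{\rho_n}(\alpha)+\sum_{W\in\WW_\tau} c_n^W\,\ell_{\rho_n}(\gamma_n^W)\longrightarrow 0,
\]
and (2) follows since $\operatorname{supp}\mu=p_\tau\cup\bigcup_W\lambda_W=\lambda_\tau$. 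For (3), a further diagonal extraction makes each sequence $\{\gamma_n^W\}$ Hausdorff converge to a lamination $L_W\subset W$, which is connected because each $\gamma_n^W$ is; because $\lambda_W$ is a minimal filling lamination and $c_n^W\gamma_n^W\to\mu^W$ with $\operatorname{supp}\mu^W=\lambda_W$, any leaf of $L_W\setminus\lambda_W$ must be isolated and spiral onto $\lambda_W$, so $\lambda_W$ is the unique minimal sublamination of $L_W$. The overall Hausdorff limit $L=p_\tau\sqcup\bigsqcup_{W\in\WW_\tau} L_W$ therefore has the property that each of its components contains exactly one component of $\lambda_\tau$.

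The main obstacle I expect is the bounded-$\rho_n$-length step for $\gamma_n^W$: converting the algebraic statement $\ell_{\rho_n}(\boundary W)\to 0$ into pinching of $\boundary W$ on the conformal boundary $X_n^S$, and then using the collar lemma and geometric finiteness to place $\gamma_n^W$ inside $W$ with a uniform length bound, requires some care with the deformation theory of geometrically finite ends. Once this is in hand, the remaining pieces---Klarreich's theorem on $\boundary_\infty\CC(W)$ and the Hausdorff-limit analysis for sequences of simple closed curves converging to a minimal filling lamination---are standard.
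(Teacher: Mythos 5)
Your overall architecture matches the paper's (build $\mu_n$ from $p_\tau$ plus rescaled bounded-length curves $c_n^W$ approaching $\lambda_W$ in $\CC(W)$, then invoke Klarreich and a Hausdorff-limit analysis), but the mechanism you use to produce the curves $\gamma_n^W$ has a genuine gap, and it is precisely the step you flag as "the main obstacle."

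The gap is the implication ``$\ell_{\rho_n}(\boundary W)\to 0$ and $\rho_n$ geometrically finite $\Rightarrow$ $\ell_{\boundary W}(X_n^S)\to 0$.'' This is false. For the Kleinian surface group $\rho_n^S=\rho_n|_{\pi_1(S)}$, the hyperbolic $3$-manifold length $\ell_{\rho_n}(\alpha)$ is small whenever $\alpha$ is short on \emph{either} end invariant $\nu_+(\rho_n^S)$ or $\nu_-(\rho_n^S)=\sigma_S(\rho_n)$, or when some subsurface projection distance $d_W(\nu_+,\nu_-)$ with $\alpha\subset\boundary W$ is large (this is exactly the content of Theorem~\ref{KGCC results}). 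Nothing in the hypothesis that $\rho_n$ lies eventually in every $\UU(\delta,\mathbb U,\tau)$ controls the length of $\alpha\in p_\tau$ in the conformal boundary $X_n^S=\nu(\rho_n)|_S$: the set $\UU(\delta,\mathbb U,\tau)$ constrains only $\ell_{\rho_n}(\alpha)$ and the projections $\pi_W(\nu(\rho_n))$. A standard Bers-slice family already shows $\ell_\rho(\alpha)\to 0$ with $\ell_\alpha(\nu_+)$ fixed. Once this implication fails, your collar-lemma argument---which needs the Margulis collar of $\boundary W$ in $X_n^S$ to become arbitrarily wide---collapses, and you cannot conclude that the Bers-short curve of $X_n^S$ meeting $W$ is entirely contained in $W$, nor that $\gamma_n^W$ has bounded $X_n^S$-length (and hence bounded $\ell_{\rho_n}$ via Bers' inequality).

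The paper routes around this by working intrinsically in the $3$-manifold rather than through the conformal boundary. For each $W\in\WW_\tau$ one takes a pleated surface (or any $1$-Lipschitz map) $f_n^W\colon(W,g_n^W)\to N_{\rho_n}$ in the class of $\rho_n|_{\pi_1(W)}$ with geodesic boundary; the boundary curves then have $g_n^W$-length equal to $\ell_{\rho_n}(\boundary W)\to 0$, which directly forces a non-peripheral curve in $(W,g_n^W)$ of uniformly bounded length, hence of uniformly bounded $\rho_n$-length. One then invokes \cite[Thm.~1.2]{pull-out}, which says that the bounded-length set $\CC_W(\rho_n,L)$ lies within uniform Hausdorff distance of a geodesic between $\pi_W(\nu_+(\rho_n^S))$ and $\pi_W(\nu_-(\rho_n^S))$ in $\CC(W)$; since $\CC_W(\rho_n,L)\ne\emptyset$ this yields a curve $c_n^W$ of bounded $\rho_n$-length within uniform distance $D_W$ of $\pi_W(\nu(\rho_n))$. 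Since $\pi_W(\nu(\rho_n))\to\lambda_W$, so does $c_n^W$, and the rest of your argument (Klarreich rescaling, diagonal subsequence, Hausdorff convergence with unique minimal sublamination $\lambda_W$ in each $L_W$) is essentially what the paper does. In short: replace your conformal-boundary/collar step with the pleated-surface construction plus \cite[Thm.~1.2]{pull-out}, and the proposal becomes correct.
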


\begin{proof}
Let $S$ be a component of $\partial_0 M$.
If $W\subset S$ is an essential subsurface, $\rho\in AH(S)$, and
$L>0$, let $\CC_W(\rho,L)$ denote the set of  curves $\gamma\in\CC(W)$
whose length $\ell_\gamma(\rho) \le L$. 

Theorem 1.2 in \cite{pull-out} gives control of the set
$\CC_W(\rho,L)$ when $L$ is large enough. Specifically, there exists $L_S>0$ so that if $L\ge L_S$
then there exists $D=D(L)$ such
that, if $\CC_W(\rho,L)$ is nonempty then the Hausdorff distance
from a geodesic connecting $\pi_W(\nu_+(\rho))$ to
$\pi_W(\nu_-(\rho))$  to $\mathcal C(W,L)$ is at most $D$. In particular, if $\pi_W(\nu_+(\rho))\in\CC(W)$,
then there exists a curve in $\CC_W(\rho,L)$ within distance $D$ of $\pi_W(\nu_+(\rho))$.

To apply this to our sequence, let $W$ be a component of
$\WW_\tau$. For each $n$, let $g_n^W$ be a hyperbolic metric with geodesic boundary on $W$ and
let $f_n^W:(W,g_n^W)\to N_{\rho_n}$ be a $1$-Lipschitz map
in the homotopy class of $\rho_n|_{\pi_1(W)}$, e.g. a pleated surface.
Since $\ell_{\partial W}(\rho_n)\to 0$, there exists $L_W>L_S$, so that, for all large enough $n$,
$g_n^W$ contains a non-peripheral curve of length at most $L_W$, so
$\CC_W(\rho_n,L_W)$ is nonempty. Applying the previous paragraph to $\rho_n^S=\rho_n|_{\pi_1(S)}$,
we obtain a curve $c^n_W$ in $W$ with
$d_{\CC(W)}(c^n_W,\pi_W(\nu(\rho_n))) \le D_W$ and
$\ell_{c^n_W}(\rho_n)\le L_W$, for some uniform $D_W=D(L_W)$.

Since $\{\pi_W(\nu(\rho_n))\}$ converges to $\lambda_W\in\partial_\infty\mathcal C(W)$,
we may choose a sequence $r_n^W\to\infty$ so that any subsequence of 
$\{c_n^W/r_n^W\}\subset \ML(W)$ has a subsequence which converges to a measured lamination with support $\lambda_W$,
where $\ML(W)$ is the space of measured laminations on $W$, see Klarreich \cite[Thm. 1.4]{klarreich}. Moreover, any
subsequence of $\{c_n^W\}$ has a subsequence which Hausdorff converges to a connected geodesic lamination supported 
in the interior of $W$.
If 
$$\mu_n= p_\tau\cup \bigcup_{W\in\mathcal W_\tau} \frac{c_n^W}{r_n^W}$$
then $\lim\ell_{\mu_n}(\rho_n) = 0$ so we may pass to a  subsequence of $\{\mu_n\}$  which
converging to a measured lamination $\mu$ whose support is 
$\lambda_\tau$. After further passage to a subsequence, we may assume that the supports of $\{\mu_n\}$ Hausdorff converge to
a geodesic lamination $L$. By construction, $p_\tau$ is contained in $L$  and if $W\in W_\tau$, then the restriction of $L$
to $W$ is supported on the interior of $W$ (since it is a limit of non-peripheral curves on $W$.) Therefore, every component of $L$ contains
in its support exactly one component of the support of $\mu$.
\end{proof}

We now combine the preceding results and topological arguments to establish Proposition \ref{nbhd system}.

\begin{proof}[Proof of Proposition \ref{nbhd system}]
It remains to prove that  if $\{\rho_n\}$ is eventually contained in every set of the form $\mathcal U(\delta, \mathbb U, \tau)$,
then $\{\rho_n\}$ converges to $\tau$ (or accumulates on $\{\tau,\bar\tau\}$ when $M$ is an interval bundle and
$\mathcal W_\tau$ is empty). It suffices to prove that every such sequence has a subsequence that converges to $\tau$
(or into $\{\tau,\bar\tau\}$ if $M$ is an interval bundle and
$\mathcal W_\tau$ is empty).

If $\{\rho_n\}$ is eventually contained in every set of the form $\mathcal U(\delta, \mathbb U, \tau)$,
Lemmas \ref{end lam doubly} and \ref{conv to end lam} imply that,
after possibly passing to a subsequence, there exist
measured laminations $\{\mu_n\}$
so that $\ell_{\mu_n}(\rho_n)\to 0$, $\{\mu_n\}$ converges to a doubly incompressible measured lamination $\mu$
with support $\lambda_\tau$, and the supports of $\{\mu_n\}$ converge to a measured lamination $L$ with the property
that every component of $L$ contains exactly one component of the support of $\mu$. Since there are no laminations 
on $\partial_0M$ with
support disjoint from $\lambda_\tau$ and every component of $\lambda_\tau$ is minimal, every component of $L$ contains a unique minimal  lamination which is a
component of $\lambda_\tau$.
Theorem \ref{doublyincomp} then implies that, passing to a further subsequence, 
$\{\rho_n\}$ converges to some $\rho\in AH(M)$.

We must show that $\rho = \tau$ (or that $\rho\in\{\tau,\bar\tau\}$ if $M$ is an interval bundle and
$\mathcal W_\tau$ is empty).
To do so, we will show that $N_\rho$ has the same marked homeomorphism
type as $N_\tau$ and that both manifolds have the same end-invariants, 
and then apply the Ending Lamination Theorem \cite{ELC2}. 

Let $M_\rho$ be a relative compact core for $N_\rho^0$ and let $P_\rho=\partial M_\rho\cap\partial N_\rho^0$.
Similarly, let $M_\tau$ be a relative compact core for $N_\tau^0$ and let $P_\tau=\partial M_\tau\cap\partial N_\tau^0$.
We claim that there exists a homotopy-equivalence $j:M_\tau\to M_\rho$, 
in the homotopy class of $\rho\circ\tau^{-1}$, such that
\begin{enumerate}
  \item $j$ takes $P_\tau$ homeomorphically to a subcollection
    $\hhat P$ of $P_\rho$,
\item
there exists a submanifold $Z$ of $\partial M_\tau \setminus P_\tau$ which consists of a compact core 
of each component of $\partial M_\tau \setminus P_\tau$ which is not a thrice-punctured sphere, such
that $j$ restricts to an orientation-preserving embedding of $Z$ into $\partial M_\rho \setminus \hhat P$
and each component of $j(Z)$ is a compact core for a component of $\partial M_\rho \setminus P_\rho$.
\end{enumerate}
In this situation, Proposition 8.1 of Canary-Hersonsky
\cite{canary-hersonsky} asserts
that $j$ is homotopic, as a map of pairs, 
to a homeomorphism of pairs 
$$J:(M_\tau,P_\tau)\to (M_\rho,\hhat P)$$
which agrees with $j$ on $Z$.
(Canary and Hersonsky's result is a nearly immediate consequence of Johannson's Classification Theorem
\cite{johannson}.)
Furthermore, we will see that  every component of $j(Z)$ is a compact core for a component of $M_\rho-\hhat P$ 
which bounds a geometrically infinite end of $N_\rho^0$. It follows that
$\hhat P = P_\rho$.

Since $M_\tau$ and $M_\rho$  are aspherical and $\rho\circ\tau^{-1}$ gives an isomorphism of 
$\pi_1(M_\tau)=\tau(\pi_1(M))$ to $\pi_1(M_\rho)=\rho(\pi_1(M))$, there is a homotopy equivalence 
$j:M_\tau\to M_\rho$ in the homotopy class of $\rho\circ\tau^{-1}$.
To establish condition (1), note that
$\ell_{\rho}(p_\tau)=0$ . Hence we may choose $j$ to take all  annular components of $P_\tau$ to annular
components of $P_\rho$. Moreover, if $T$ is a toroidal component of $P_\tau$, then $j(T)$ is homotopic to
a toroidal component of $P_\rho$, so we may assume that $j(T)\subset P_\rho$ in this case as well.
Since the restriction of $j$ to any component of $P_\tau$ is a homotopy equivalence to the image component of
$P_\rho$, we may assume that the restriction of $j$ to each component of $P_\tau$ is a homeomorphism
onto a component of $P_\rho$.

In order to establish condition (2), we will show that geometrically infinite ends of $N^0_\tau$ are associated
to geometrically infinite ends of $N^0_\rho$ with the same ending lamination.
If $W\in\mathcal W_\tau$ and $S$ is the component of $\partial_0M$ containing $W$,
then $\{\pi_W(\nu(\rho_n))\}$ converges to $\lambda_W$, so
Theorem \ref{endlams in limit} implies that $\rho^S=\rho|_{\pi_1(S)}$ has an upward-pointing
geometrically infinite end with support $W$ and ending lamination  $\lambda_W$.
Condition (2) then follows from the following more general claim.

\begin{lemma}{no finite cover}
Suppose that $M$ is a compact, orientable, hyperbolizable \hbox{3-manifold} with non-empty incompressible boundary
and that $W$ is an essential subsurface of a boundary component $S$ of $M$ which is not an annulus
or a pair of pants.
If $\rho\in AH(M)$, $M_\rho$ is a relative compact core for $N_\rho^0$ and $\rho^S$ has an outward-pointing 
geometrically infinite end with base surface $W$,
then there exists a homotopy equivalence $g:M\to M_\rho$ which
restricts to an orientation-preserving homeomorphism
of $W$ to a subsurface  $g(W)$ in $\partial M_\rho$ which bounds a geometrically infinite end of $N_\rho^0$
\end{lemma}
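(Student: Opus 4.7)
The plan is to apply Canary's Covering Theorem to the covering $p\colon N_{\rho^S}\to N_\rho$ induced by the inclusion $\rho(\pi_1(S))\subset\rho(\pi_1(M))$, producing a geometrically infinite end of $N_\rho$ covered by $\widetilde e$, and then to rule out any nontrivial finite cover. I begin by using the incompressibility of $\partial M$ to arrange a homotopy equivalence $h\colon M\to M_\rho$ in the homotopy class of $\rho$ whose restriction to $S$ is an orientation-preserving homeomorphism onto a boundary component $S_\rho\subset\partial M_\rho$; the conjugacy class of $\rho(\pi_1(S))$ in $\pi_1(M_\rho)$ uniquely determines $S_\rho$, and the induced surface-to-surface homotopy equivalence $S\to S_\rho$ is homotopic to a homeomorphism.

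Since $S$ has negative Euler characteristic (as it contains the non-annular, non-pair-of-pants subsurface $W$), $N_\rho$ has infinite volume. Canary's Covering Theorem then provides a neighborhood of $\widetilde e$ on which $p$ is finite-to-one, so $\widetilde e$ covers an end $e$ of $N_\rho$ with some finite degree $d$. Since geometrically finite ends cannot be covered by geometrically infinite ones, $e$ is itself geometrically infinite, and is bounded by a subsurface $W'\subset\partial M_\rho\setminus P_\rho$. By tracing the correspondence of compact cores under $p$ (the upward end of $N_{\rho^S}^0$ pushes out from the $S$-boundary of its compact core, which projects to a neighborhood of $S_\rho$), the end $e$ lies on the $S_\rho$-side of $M_\rho$, so $W'\subset S_\rho$. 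Using the homeomorphism $S\cong S_\rho$, identify $W'$ with a subsurface $W''\subset S$, so that $\pi_1(W)\subset\pi_1(W'')$ is a finite-index inclusion in $\pi_1(S)$ with index $d$.

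The main step is to show $d=1$. Since $W$ and $W''$ are both essential embedded subsurfaces of $S$, a Scott compact-core analysis applied in the cover of $S$ associated to $\pi_1(W'')$ implies that $W$ can be isotoped into $W''$, so $\chi(W)\geq\chi(W'')$. Meanwhile, the finite cover $W\to W''$ gives $\chi(W)=d\,\chi(W'')$, and since $\chi(W'')<0$ these two relations force $d=1$ and $W=W''$ up to isotopy. A small isotopy of $h$ near $W$ then yields the desired homotopy equivalence $g\colon M\to M_\rho$ whose restriction to $W$ is an orientation-preserving homeomorphism onto $g(W)=W'$, bounding the geometrically infinite end $e$ of $N_\rho^0$.

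The main obstacle is the $d=1$ step. One must translate the abstract subgroup containment $\pi_1(W)\subset\pi_1(W'')$ into the topological statement that $W$ embeds (up to isotopy) in $W''$, and only then does the Euler characteristic comparison close the argument. The key ingredient is that both $W$ and $W''$ are essential embedded subsurfaces of the same surface $S$, so that a finite-index inclusion of their fundamental groups must arise from coinciding subsurfaces rather than a nontrivial finite cover.
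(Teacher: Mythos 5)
Your overall strategy is sound and follows the paper's first steps — apply the Covering Theorem to produce a finite cover from the $W$-end of $N_{\rho^S}^0$ to a geometrically infinite end of $N_\rho^0$ bounded by a subsurface $W'$ of the boundary, and then reduce to showing the covering degree $d$ is $1$. The paper also needs this reduction, but handles the degree-$1$ claim by a completely different route: it extracts a peripheral curve $\alpha$ of $W$ that is indivisible in $\pi_1(W)$ but whose image $g(\alpha)$ is divisible in $\pi_1(W')$, deduces that $\alpha$ would be divisible in $\pi_1(M)$ though not in $\pi_1(\partial M)$, and then invokes Johannson's Lemma 32.1 and Classification Theorem on the characteristic submanifold to derive a contradiction. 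Your route via subsurfaces of $S$ and Euler characteristics is a legitimately different idea, and (if made rigorous) arguably more elementary; but it contains a gap at exactly the point where the paper's use of Johannson's theory is doing real work.

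The gap is the sentence ``identify $W'$ with a subsurface $W''\subset S$, so that $\pi_1(W)\subset\pi_1(W'')$ is a finite-index inclusion in $\pi_1(S)$ with index $d$.'' The Covering Theorem gives a $d$-fold cover $W\to W'$ whose induced map on fundamental groups commutes with the inclusions into $\pi_1(M_\rho)=\rho(\pi_1(M))$; equivalently, $\rho(\pi_1(W))$ is conjugate \emph{in $\pi_1(M_\rho)$} to an index-$d$ subgroup of $\pi_1(W')$. To run your Scott-core argument you need the stronger statement that this conjugacy can be realized inside $\pi_1(S_\rho)\cong\pi_1(S)$, since only then does $W$ lift as an embedded essential subsurface to the cover of $S$ corresponding to $\pi_1(W'')$. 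This is not automatic: $\pi_1(S_\rho)$ need not be malnormal in $\pi_1(M_\rho)$ when $M$ admits essential annuli, so conjugacy in $\pi_1(M_\rho)$ does not reduce to conjugacy in $\pi_1(S_\rho)$ for free. (Twisted interval bundles, where $\pi_1(S)$ has index $2$ in $\pi_1(M)$, are a concrete instance of this concern.) A careful basepoint/lifting argument tracking the homeomorphic lift $\tilde S_\rho\subset N_{\rho^S}$ of $S_\rho$ and comparing $W$ with the lift $\tilde W'$ of $W'$ could plausibly close this, but as written the needed compatibility is simply asserted. That compatibility is precisely the content that the paper replaces by the divisibility analysis inside the characteristic submanifold, which handles the interaction with essential annuli directly. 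The final Euler characteristic step, assuming the group inclusion, is fine: $\chi(W)=d\,\chi(W'')$ from the cover, and $\chi(W)\ge\chi(W'')$ from isotoping the lift of $W$ into the compact core of the cover of $S$ associated to $\pi_1(W'')$, which together force $d=1$ since $\chi(W'')<0$.
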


\begin{proof}
The Covering Theorem (\cite{canary-cover}  and \cite[Thm. 9.2.2]{thurston-notes}) implies that the covering map
$\pi:N_{\rho^S}\to N_\rho$ restricts to a finite-to-one cover from
a neighborhood $U$ of the end  of $N_{\rho^S}^0$ associated to $W$ to a neighborhood $\widehat U$ of a
geometrically infinite end of $N^0_\rho$.
Let $W'$ be the component of $\partial M_\rho\setminus P_\rho$ associated to the end of $N_\rho^0$ with neighborhood 
$\widehat U$.  Since $U$ has a subneighborhood homeomorphic to $W\times (0,\infty)$ which maps into 
a subneighborhood of $\widehat U$ which is homeomorphic to $W'\times (0,\infty)$, we see that there exists a
homotopy equivalence $g:M\to M_\rho$ so that 
$g|_W$ is a proper \hbox{$\pi_1$-injective}  orientation-preserving map onto $W'$,
so we may assume $g|_W$ is an orientation-preserving finite cover of
its image (see \cite[Prop. 3.3.]{johannson}).

If $g|_W$ is not a homeomorphism, then there exists a curve $\alpha$ on $W$ which is indivisible
in $\pi_1(W)$ (i.e. generates a maximal cyclic subgroup), but whose image $g(\alpha)$ is divisible in $\pi_1(W')$
(i.e. does not generate a maximal cyclic subgroup of $\pi_1(W')$).
It follows that $\alpha$ is a peripheral curve which is divisible in $M$, but not in $\partial M$.
A result of Johannson \cite[Lem. 32.1]{johannson} implies that  either 
(i) $\alpha$ bounds an immersed essential M\"obius band $B$ in an interval bundle component $\Sigma_0$
(with base surface $F_0$) of $\Sigma(M_\tau)$,
or (ii) $\alpha$ is homotopic to the core curve of an annulus in the frontier of a solid torus component $V$ of $\Sigma(M_\tau)$. 
In case (i), Johannson's Classification Theorem \cite[Thm. 24.2]{johannson} implies that  $g$ is homotopic to a 
homotopy equivalence which restricts to a homeomorphism of $\Sigma_0$ to  an interval bundle component $g(\Sigma_0)$  of
$\Sigma(M_\rho)$, so $g(\alpha)$ bounds an immersed essential M\"obius band in $M_\rho$. 
In case (ii), Johannson's Classification Theorem  implies that $g$ is homotopic to a homotopy
equivalence which takes $V$ to a solid torus component $g(V)$ of $\Sigma(M_\rho)$ and restricts to a homeomorphism
between the frontier of $V$ in $M$ and the frontier of $g(V)$ in $M_\rho$, so $g(\alpha)$ is 
is homotopic to the core curve of an  annulus in the frontier of $g(V)$.
In either case, $g(\alpha)$ cannot be divisible in $\pi_1(W')\subset \pi_1(M_\rho)$, so we have again
achieved a contradiction.
\end{proof}

Since we have adjusted $j$ to satisfy conditions (1) and (2) above, 
we may apply Proposition 8.1 of \cite{canary-hersonsky} to upgrade $j$ to  a homeomorphism
of pairs $J:(M_\tau,P_\tau)\to(M_\rho,P_\rho)$.

If $J$ is orientation-preserving, then the Ending Lamination
Theorem tells us that $\rho=\tau$ in $AH(M)$ and we are done.
In particular, we are done if $\mathcal W_\tau$ is non-empty, since the restriction of $J$ to
each element of $\mathcal W_\tau$ is an orientation-preserving homeomorphism.

If $J$ is orientation-reversing, then the Ending Lamination
Theorem tells us that $\rho=\bar\tau$ in $AH(M)$. Since we may take $M_{\bar\tau}$ to be $M_\tau$ with
the opposite orientation, $J$ is an orientation-reversing involution of $M_\tau$ which is homotopic
to the identity. It follows from the lemma below that $M_\tau$, and therefore $M$, is an interval bundle.
So, since $M$ is an interval bundle, $\mathcal W_\tau$ is empty,  and $\rho$ lies in $\{\tau,\bar\tau\}$,
we are again done.

\begin{lemma}{}
If $M$ is a compact, orientable, hyperbolizable 3-manifold with non-empty incompressible boundary, and 
$g:M\to M$ is an orientation-reversing involution which is homotopic to the identity,
then $M$ is an interval bundle.
\end{lemma}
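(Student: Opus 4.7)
The plan is to deduce the lemma from the classical rigidity theorem of Waldhausen (as refined by Johannson) for self-homeomorphisms of Haken 3-manifolds. The manifold $M$ is Haken, since it is hyperbolizable (hence irreducible and atoroidal) with non-empty incompressible boundary. The first observation, which converts the hypothesis into a useful form, is that $g$ cannot be isotopic to the identity: every isotopy preserves orientation and $g$ reverses it. So the real content to exploit is that $g$ is homotopic but not isotopic to $\operatorname{id}$.

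Waldhausen's theorem, in the relevant form, asserts that a self-homeomorphism of a Haken $3$-manifold with incompressible boundary homotopic to the identity is isotopic to the identity, with the sole exception of the fiber-reversing involution $\iota_M$ on an interval bundle, which is homotopic but not isotopic to the identity. (This involution and its properties are recorded in the paragraph preceding Proposition \ref{nbhd system}, and confirm that this exceptional case genuinely occurs.) Applying the contrapositive to $g$ forces $M$ to be an interval bundle, which is the conclusion.

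The main obstacle I foresee is matching the hypothesis ``$g \simeq \operatorname{id}$'' with the form in which Waldhausen's theorem is traditionally stated, since the latter usually demands a homotopy of pairs $(M,\partial M)$ or through boundary-preserving maps. To handle this I would first adjust $g$ by a preliminary homotopy so that it preserves $\partial M$ setwise and restricts on each boundary component to a self-homeomorphism homotopic to the identity. This adjustment is available because $g_\ast = \operatorname{id}$ on $\pi_1(M)$ and each component of $\partial M$ is $\pi_1$-injective, so $g$ must send each component of $\partial M$ to a parallel copy of itself; the two-dimensional analogue of Waldhausen's theorem then upgrades the induced map on $\partial M$ to one isotopic to the identity. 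With this normalization, Waldhausen's theorem applies in its standard form and the argument concludes as indicated.
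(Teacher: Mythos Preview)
Your normalization step is where the argument breaks down, and it is also where the real content hides. Suppose $g$ fixes a boundary component $S$ setwise. Since $g$ reverses orientation on $M$ and must preserve the outward normal direction along $\partial M$, the restriction $g|_S$ is an \emph{orientation-reversing} self-homeomorphism of $S$. Such a map is never homotopic to the identity on $S$, so your claim that ``the two-dimensional analogue of Waldhausen's theorem then upgrades the induced map on $\partial M$ to one isotopic to the identity'' is false. You therefore cannot place yourself in the hypotheses of the standard rel-$\partial$ Waldhausen theorem, and the version you quote earlier with the interval-bundle exception already built in is not a standard formulation --- establishing that version is essentially equivalent to the lemma itself.

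Ironically, this obstruction \emph{is} the proof, once organized correctly: if $M$ is not an interval bundle one argues that $g$ must fix each boundary component and that $(g|_S)_*$ is inner in $\pi_1(S)$, whence Dehn--Nielsen--Baer forces $g|_S$ to be orientation-preserving, a contradiction. But carrying this out still needs nontrivial $3$-manifold input (that parallel incompressible boundary components force a product, that peripheral subgroups are their own normalizers outside the twisted-$I$-bundle case), none of which you supply.

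The paper bypasses this case analysis with a different device: it restricts the given homotopy to a single boundary component $S$, obtaining a proper map $H\colon S\times[0,1]\to M$, observes via the same orientation argument that $H$ cannot be properly homotoped into $\partial M$, and then invokes the Waldhausen-type theorem for proper $\pi_1$-injective maps of $S\times I$ into a Haken manifold (Hempel, Theorem~13.6) to conclude that $H$ is properly homotopic to a covering map. Hempel's Theorem~10.6 then gives that $M$ is an interval bundle.
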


\begin{proof}
Let $S$ be a boundary component of $M$ and let 
$H:S\times [0,1]\to M$ be the restriction to $S$ of the homotopy of $g$ to the identity map.
Notice that  $H$ is not properly homotopic to a map $\hat H$ with image in $\partial M$, 
since then $\hat H$  would be a homotopy of an orientation-reversing involution of $S$ to an orientation-preserving
homeomorphism of $S$. Then, a generalization of Waldhausen's theorem,
see Hempel \cite[Theorem 13.6]{hempel}, implies that $H$ is properly homotopic to a covering map. 
Therefore,  $M$ must be an interval bundle, see Hempel \cite[Thm 10.6]{hempel}.
\end{proof}
\end{proof}

\section{Bounds on the skinning map}
\label{skinning}

Thurston's skinning map records the geometry, i.e. end invariants, of the ``inward-pointing'' ends of the covers
of a hyperbolic 3-manifold  associated to its peripheral subgroups. 
In this section, we use the results of \cite{BBCL} to  show that certain aspects of the geometry of these inward-pointing ends
is uniformly bounded as one approaches a quasiconformally rigid hyperbolic 3-manifold.

Let $M$ be a compact, orientable hyperbolizable 3-manifold with incompressible boundary,
which is not an untwisted interval bundle. If \hbox{$\rho\in AH_0(M)$} and $S$ is a component of
$\partial_0 M$, then $\rho^S=\rho|_{\pi_1(S)}$ is a Kleinian surface group with end invariants $(\nu_+(\rho^S),\nu_-(\rho^S))$.
We assume that we have chosen an orientation on $S$ which is consistent with the orientation on $M$,
so that $\nu_+(\rho^S)$ is the restriction $\nu(\rho)|_S$ of $\nu(\rho)$ to $S$.
One then defines a map $\sigma_S$, with image in the set
$\mathcal E(S)$ of ending invariants on $S$, by setting
$\sigma_S(\rho)=\nu_-(\rho^S)$. 
The {\em skinning map} is the product map
$$\sigma_M =\prod\sigma_S:AH_0(M)\to \mathcal E(\partial_0M)$$ 
where the product is taken over all components of $\partial_0 M$ and 
$\mathcal E(\partial_0 M)$ is the product of the $\mathcal E(S)$. 

\medskip
\noindent
{\bf Remark:} In this paper we only actually apply the skinning map to representations in ${\rm int}(AH_0(M))$.
If $\partial M$ does not contain tori, then the image of $\rho\in {\rm int}(AH_0(M))$ under $\sigma_M$ will lie in the 
Teichm\"uller space $\Teich(\partial M)$. If $\partial M$ does contain tori, let $t$ be the collection of curves 
on $\partial_0M$ which are homotopic into a toroidal component of 
$\partial M$. If $\rho\in {\rm int}(AH_0(M))$, then $\sigma_M(\rho)$ consists of the parabolic locus $t$ and a conformal structure on
$\partial_0 M - t$.
(See \cite[Sec. 2]{BBCM-windows} for a more detailed discussion
of the skinning map in the setting of 3-manifolds with incompressible boundary.)

\begin{proposition}{preliminary bounds} 
Let $M$ be a compact, orientable, hyperbolizable 3-manifold with incompressible boundary  
and let $\mu$ be a complete marking on $\partial_0 M$.
If $\tau\in AH_0(M)$ is quasiconformally rigid, then
there exists a neighborhood $\UU$ of $\tau$ such that for all $\rho\in\UU$
and $\alpha\in p_\tau$,
$$\mm_\alpha(\sigma_M(\rho),\mu) < R.$$
\end{proposition}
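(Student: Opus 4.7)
The argument proceeds by contradiction. Suppose the conclusion fails. Then there is a sequence $\rho_n \to \tau$ in ${\rm int}(AH_0(M))$, together with curves $\alpha_n \in p_\tau$, satisfying $\mm_{\alpha_n}(\sigma_M(\rho_n),\mu) \to \infty$. Since $p_\tau$ is a finite collection of curves, after passing to a subsequence we may assume $\alpha_n = \alpha$ is a fixed curve lying on some component $S$ of $\partial_0 M$. The strategy is to apply Theorem \ref{BBCL result} to the restrictions $\rho_n^S = \rho_n|_{\pi_1(S)} \in AH(S)$, which converge to $\tau^S=\tau|_{\pi_1(S)}$. By definition of the skinning map, $\sigma_M(\rho_n)|_S = \nu_-(\rho_n^S)$, so $\mm_\alpha(\nu_-(\rho_n^S),\mu)\to\infty$; and by the orientation convention that $\nu_+(\tau^S)=\nu(\tau)|_S$, the curve $\alpha \in p_\tau \cap S$ lies in the parabolic locus of $\nu_+(\tau^S)$ and is therefore an upward-pointing parabolic for $\tau^S$.

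The main task is to verify the geometric-embedding hypothesis of Theorem \ref{BBCL result}: that a compact core of $N_{\tau^S}$ embeds in the geometric limit of $\rho_n^S(\pi_1(S))$. After a further subsequence, both $\rho_n(\pi_1(M))$ and $\rho_n^S(\pi_1(S))$ converge geometrically, to Kleinian groups $\hat\Gamma$ and $\hat\Gamma^S$ respectively. The inclusion $\rho_n^S(\pi_1(S)) \subset \rho_n(\pi_1(M))$ together with Chabauty convergence forces $\hat\Gamma^S \subset \hat\Gamma$, so setting $\widehat N = \mathbb{H}^3/\hat\Gamma$ and $\widehat{N^S} = \mathbb{H}^3/\hat\Gamma^S$ we obtain a covering $\widehat{N^S} \to \widehat N$. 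Since $\tau$ is quasiconformally rigid, Proposition \ref{convexembed} provides a compact core $M_\tau \subset N_\tau$ which embeds in $\widehat N$. The boundary component $S' \subset \partial M_\tau$ corresponding to $S$ has fundamental group $\tau^S(\pi_1(S)) \subset \hat\Gamma^S$, so the embedding $S' \hookrightarrow \widehat N$ lifts to an embedding into $\widehat{N^S}$. Thickening this lifted surface inside the component of the preimage of $M_\tau$ in $\widehat{N^S}$ containing it, and using the incompressibility of $S'$ in $M_\tau$, one extracts an embedded compact submanifold of $\widehat{N^S}$ homeomorphic to $S \times [-1,1]$ that further lifts along $N_{\tau^S} \to \widehat{N^S}$ to an embedded compact core of $N_{\tau^S}$ projecting injectively to $\widehat{N^S}$.

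With all hypotheses verified, Theorem \ref{BBCL result} implies that $\{\mm_\alpha(\nu_-(\rho_n^S),\mu)\}$ is eventually bounded, contradicting our assumption and completing the proof. The main obstacle is the covering-space analysis in the preceding paragraph: one must track carefully how $\tau^S(\pi_1(S))$ sits simultaneously inside $\tau(\pi_1(M))$, $\hat\Gamma^S$, and $\hat\Gamma$, and exploit quasiconformal rigidity of $\tau$ (through Proposition \ref{convexembed}) together with the incompressibility of $S'$ in $M_\tau$ to guarantee that the thickening of $S'$ stays within a single sheet of the cover $N_{\tau^S}\to\widehat{N^S}$. Quasiconformal rigidity enters exactly at this juncture, since it is precisely what yields the initial embedded compact core in the ambient geometric limit.
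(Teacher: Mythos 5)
Your proof is essentially the same as the paper's: argue by contradiction, restrict to the surface subgroup $\rho_n^S$, pass to geometric limits $\hat\Gamma^S\subset\hat\Gamma$, use Proposition \ref{convexembed} to get a compact core for $N_\tau$ embedding in $\widehat N$, lift the relevant boundary component to produce a compact core for $N_{\tau^S}$ embedding in $\widehat N^S$, and apply Theorem \ref{BBCL result}. The only small detail the paper handles that you elide is identifying the compact core supplied by Proposition \ref{convexembed} with a standard relative compact core $M_\tau$ in the correct marked homotopy class (so that ``the boundary component corresponding to $S$'' really is $f(S)$ for $f$ homotopic to $\tau$); the paper does this via the uniqueness-of-cores theorem of McCullough--Miller--Swarup, and the paper also thickens $f(S)$ in $\widehat N$ before lifting rather than after, but both orderings give the same conclusion.
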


\begin{proof}
If the proposition fails, then there exists a sequence $\{\rho_n\}$ in
${\rm int}(AH_0(M))$ converging to $\tau$
and $\alpha\in p_\tau$
such that
$\mm_\alpha(\sigma_M(\rho_n),\mu)\to \infty$.
Let $S$ be the component of $\partial_0M$ containing $\alpha$, so 
$\mm_\alpha(\sigma_M(\rho_n),\mu)=\mm_\alpha(\sigma_S(\rho_n^S),\mu|_S)$ and
$\{\rho_n^S\}$  converges to $\tau^S=\tau|_{\pi_1(S)}$.

In order to apply Theorem \ref{BBCL result}, we need to find a
geometric limit $\hat\Gamma^S$ for $\{\rho_n(\pi_1(S))\}$ so that a
compact core of $N_{\tau^S}$
embeds in $\widehat N^S=\mathbb H^3/\hat\Gamma_S$.
Pass first to a subsequence so that $\{\rho_n(\pi_1(M))\}$ converges geometrically to $\hat\Gamma$
and, by passing to a further subsequence, so
that $\{\rho_n(\pi_1(S))\}$ converges geometrically to $\hat\Gamma^S\subset\hat\Gamma$. 
It follows from Proposition \ref{convexembed} that there exists a compact core $C_\tau$ for $N_\tau$ which
embeds, under the natural covering map,
in $\widehat N= \mathbb H^3/\hat\Gamma$. 

Since $\tau\in AH_0(M)$, there exists a homeomorphism $h_\tau:M\to M_\tau$ so that $(h_\tau)_*$ is  conjugate to $\tau$,
where $M_\tau$ is a relative compact core for $N_\tau^0$. Moreover, by a result of McCullough, Miller and Swarup \cite{MMS},
there exists a homeomorphism $g:M_\tau\to C_\tau$ so that $f=g\circ h_\tau$ is in the homotopy class of $\tau$.
Since $f(S)$ embeds in $\hat N$, it admits a compact regular neighborhood $X$ which embeds in $\widehat N$.
Then, $X$ lifts to a compact core for $N_{\tau^S}$ which embeds in $\widehat N^S=\mathbb H^3/\hat\Gamma^S$.

Notice that $\sigma_S(\rho_n)=\nu(\rho_n^S)_-$ for all $n$.
Since $\alpha$ is an upward-pointing parabolic for $\tau|_{\pi_1(S)}$, 
Theorem \ref{BBCL result} then implies that if $\mu$ is a complete marking on $S$, then
$\{\mm_\alpha(\sigma_S(\rho_n),\mu|_S)\}$ is eventually bounded, which provides
a contradiction.
\end{proof}

\section{Proofs of main results}

Suppose that $\tau$ is quasiconformally rigid. We may assume,
by precomposing by an element of ${\rm Out}(\pi_1(M))$ and 
by replacing $M$ by $M_\tau$, that $\tau\in AH_0(M)$.
Theorem \ref{no bump} then implies that $\tau$ lies in the boundary of ${\rm int}(AH_0(M))$ and does not lie
in the boundary of any other component of the interior of $AH(M)$. Therefore, Theorem
\ref{nonselfbump}, which asserts that no component of ${\rm int}(AH(M))$ self-bumps at $\tau$, follows
immediately from the following result.

\begin{proposition}{joining nearby points}
Let $\tau$ be a quasiconformally rigid representation in $AH_0(M)$. Given any
open neighborhood $V$ of $\tau$, there exists a sub-neighborhood $\hat V\subset V$ of $\tau$ so that
any two points in $\hat V\cap {\rm int}(AH_0(M))$ can be joined by a path in $V\cap {\rm int}(AH_0(M))$. 
\end{proposition}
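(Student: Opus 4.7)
The plan is to select $\hat V$ inside $V$ using Proposition \ref{nbhd system} and Proposition \ref{preliminary bounds}, and then to construct paths via Teichm\"uller-space navigation from \cite{nobumping}. I treat the case where $M$ is not an interval bundle; the interval bundle case requires only notational modifications accounting for possible accumulation at $\{\tau,\bar\tau\}$.

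By Proposition \ref{nbhd system} I may assume $V = \mathcal U(\delta,\mathbb U,\tau) \cap {\rm int}(AH_0(M))$ for some $\delta>0$ and $\mathbb U = \{U_W\}_{W\in\mathcal W_\tau}$; fix a complete marking $\mu$ on $\partial_0 M$. Shrinking $V$, Proposition \ref{preliminary bounds} produces $R>0$ and a sub-neighborhood $V_0 = \mathcal U(\delta_0,\mathbb U_0,\tau) \subset V$ with $\mathbb U_0 = \{U^0_W\}$, $U^0_W \subset U_W$, on which $\mm_\alpha(\sigma_M(\rho),\mu) < R$ for every $\rho\in V_0$ and every $\alpha\in p_\tau$. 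Let $K = K(\delta_0/2, \partial_0 M)$ from Theorem \ref{KGCC results}(1), so that any $\sigma \in AH(S)$ with $\mm_\alpha(\nu_+(\sigma),\nu_-(\sigma))>K$ satisfies $\ell_\sigma(\alpha)<\delta_0/2$. Pick nested neighborhoods $\hat U_W \subset U^0_W$ of $\lambda_W$ and $\hat\delta \leq \delta_0$ small enough that $\hat V := \mathcal U(\hat\delta,\hat{\mathbb U},\tau) \cap {\rm int}(AH_0(M))$ consists of representations $\rho$ with $\mm_\alpha(\nu(\rho)|_S,\mu|_S) > K+R+1$ for every $\alpha \in p_\tau \cap S$; existence of such $\hat\delta$ follows from Theorem \ref{KGCC results}(2) applied inside $V_0$ together with the triangle inequality and the skinning bound.

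Given $\rho_1,\rho_2 \in \hat V$, the Teichm\"uller-space navigation lemmas of \cite{nobumping} produce a continuous path $\nu_t \in \mathcal T(\partial_0 M)$ from $\nu(\rho_1)$ to $\nu(\rho_2)$ such that along the path (i) $\pi_W(\nu_t) \in U^0_W$ for every $W \in \mathcal W_\tau$ and (ii) $\mm_\alpha(\nu_t|_S,\mu|_S) > K+R$ for every $\alpha \in p_\tau \cap S$. Let $\rho_t \in {\rm int}(AH_0(M))$ be the image under the homeomorphic identification ${\rm int}(AH_0(M)) \cong \mathcal T(\partial_0 M)$. To verify $\rho_t \in V$, set $T = \{t \in [0,1] : \rho_s \in V_0 \text{ for all } s \in [0,t]\}$; clearly $0 \in T$, and $T$ is relatively open since $V_0$ is open. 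For any $s \in T$, Proposition \ref{preliminary bounds} gives $\mm_\alpha(\sigma_S(\rho_s),\mu|_S) < R$, which with (ii) and the triangle inequality for $\mm_\alpha$ yields $\mm_\alpha(\nu(\rho_s)|_S,\sigma_S(\rho_s)) > K$, whence $\ell_{\rho_s}(\alpha) < \delta_0/2$ by the choice of $K$. The strict inequality $\ell_\rho(\alpha) < \delta_0$ therefore persists at $t_* = \sup T$ by continuity of $\ell_\rho(\alpha)$ in $\rho$; condition (i) also persists at $t_*$ by construction of $\nu_t$. Hence $\rho_{t_*} \in V_0$, forcing $T=[0,1]$ and $\rho_t \in V_0 \subset V$ throughout.

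The main obstacle is the apparent circularity in the final step: the skinning bound of Proposition \ref{preliminary bounds} holds only on a specific neighborhood of $\tau$, but knowing $\rho_t$ lies in that neighborhood requires length bounds that in turn depend on the skinning bound. The bootstrap along $T$ resolves this via the slack incorporated into $\hat V$, which propagates a uniform strict length bound to the limit $t_*$ through triangle inequality, skinning bound, and continuity of length. A further technical subtlety is the lack of continuity of subsurface projections on $\mathcal T$, which is bypassed by using the explicit projection control provided by the navigation paths of \cite{nobumping} rather than any naive continuity argument.
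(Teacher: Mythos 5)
Your overall strategy matches the paper's — obtain a skinning bound via Proposition \ref{preliminary bounds}, combine it with the length--projection machinery of Theorem \ref{KGCC results}, and navigate in Teichm\"uller space using the lemmas from \cite{nobumping} — and your continuity/bootstrap argument via the set $T$ is a valid (if slightly reframed) version of the paper's by-contradiction check that the path stays in the target neighborhood. However, there are two concrete gaps in the path construction itself.

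First, you assert that the navigation lemmas produce a single path from $\nu(\rho_1)$ to $\nu(\rho_2)$ satisfying both (i) $\pi_W(\nu_t)\in U^0_W$ and (ii) $\mm_\alpha(\nu_t|_S,\mu|_S)>K+R$ throughout. That is not what either lemma gives. Lemma \ref{twist and shrink} emanates from one endpoint and pinches the curves of $p_\tau$ while controlling $\mm_\alpha$ and the diameters of the projections; Lemma \ref{W connected} connects two surfaces where $p_\tau$ is already short and controls only $\ell_\alpha(Y_t)$ and $\pi_W(Y_t)$ — it gives no lower bound on $\mm_\alpha(Y_t,\mu)$. The paper therefore splits the path into three stages (twist-and-shrink, W-connected, twist-and-shrink), and on the middle stage it does not invoke the skinning bound at all: it uses Bers' inequality $\ell_\alpha(\rho)\le 2\ell_\alpha(\nu(\rho))$ \cite{bers-slice} to pass directly from the Teichm\"uller length bound of Lemma \ref{W connected} to the hyperbolic-manifold length bound needed for membership in $\mathcal U(\delta,\mathbb U,\tau)$. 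Your bootstrap, which conditions on (ii) holding along the whole path, therefore has no justification on the Lemma \ref{W connected} portion; Bers' inequality is the missing ingredient there, and you never mention it.

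Second, your threshold for entering $\hat V$ is $\mm_\alpha(\nu(\rho)|_S,\mu|_S)>K+R+1$, but Lemma \ref{twist and shrink} requires $\mm_\alpha(X,\mu)>h(\delta,K+R,S)$, and $h$ is an a priori unrelated (typically much larger) constant. The paper's version gets this right: it applies Theorem \ref{KGCC results}(2) to choose $\epsilon$ small enough that $\mm_\gamma(\nu(\rho),\sigma_M(\rho))>h+R$, and then the triangle inequality together with the skinning bound yields $\mm_\gamma(\nu(\rho),\mu)>h$, which is exactly the hypothesis of Lemma \ref{twist and shrink}. Finally, waving away the interval-bundle case as ``notational modifications'' undersells it: Proposition \ref{nbhd system} only gives a neighborhood system for the pair $\{\tau,\bar\tau\}$ when $\mathcal W_\tau=\emptyset$, and the paper has to introduce the disjoint neighborhood $V'$ of $\bar\tau$ and argue at the end that the constructed path cannot jump into $V'$; that step requires a short but genuine argument.
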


Notice that Theorem \ref{main}, which asserts that $AH(M)$ is locally connected at $\tau$,  follows  from the
facts that no two components of ${\rm int}(AH(M))$ bump at $\tau$ (Theorem \ref{no bump}), no component of
${\rm int}(AH(M))$ self-bumps at $\tau$, and that $AH(M)$ is the closure of its interior \cite{ELC2}.

\medskip

We will make crucial use of results from our earlier work \cite{nobumping} which analyzed the
relationship between Fenchel-Nielsen coordinates on Teichm\"uller space and subsurface projections.
The first result allows
us to pinch curves  in the conformal boundary while controlling complementary subsurface projections.

\begin{lemma}{twist and shrink}{\rm (\cite[Lemma 6.1]{nobumping})}
Given a (possibly disconnected) surface $S$ and constants $K$ and $\delta>0$, there exists $c=c(S)$
and $h=h(\delta,K,S)$ such that
if $X\in \Teich(S)$, $\mu$ is a complete marking of $S$ 
and $\ba$ is a curve system on $S$  such that
$$
\mm_{\alpha}(X,\mu) > h  (\delta,K,S)
$$
for each component $\alpha$ of $\ba$, then there exists a path
$\{X_t:t\in[0,1]\}$ in $\Teich(S)$ with $X_0=X$ such that 
\begin{enumerate}
\item 
$l_{\alpha}(X_1) < \delta/2$ for each $\alpha\in\ba$,
\item  
$\mm_{\alpha}(X_t,\mu) > K$ for each $\alpha\in\ba$ and each $t\in[0,1]$, and
\item 
$\diam(\pi_W(\{X_t:t\in[0,1]\})) < c$, for any subsurface $W$ disjoint from $\ba$.
\end{enumerate}
\end{lemma}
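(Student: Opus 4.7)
The plan is to construct the path $\{X_t\}$ explicitly in Fenchel--Nielsen coordinates, reducing the problem to quantitative estimates on how subsurface projections vary under length pinching. First I would extend $\ba$ to a pants decomposition $\mathcal P$ of $S$ and work in the associated Fenchel--Nielsen coordinates $(\ell_\gamma,\theta_\gamma)_{\gamma\in\mathcal P}$. The path $X_t$ is defined by linearly interpolating the length coordinates of the curves of $\ba$ from their values at $X_0=X$ down to a common value strictly less than $\delta/2$, while holding every other coordinate (the lengths of $\mathcal P\setminus\ba$ and all twists in $\mathcal P$) fixed. Property (1) is then immediate from the construction.

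For property (3), since only the length coordinates of $\ba$ vary along $X_t$, the hyperbolic structure of $X_t$ restricted to any essential subsurface $W$ disjoint from $\ba$ is unaltered outside the collars of the curves of $\ba$. A uniformly bounded (in terms of $S$ alone) family of short curves in $S$ therefore realizes a Bers marking and crosses $W$ essentially throughout the path, so one may bound $\diam\pi_W(\{X_t\})$ by a constant $c=c(S)$ with no dependence on $\delta$ or $K$.

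The principal obstacle is property (2), which requires $\mm_\alpha(X_t,\mu)>K$ for every $\alpha\in\ba$ and every $t\in[0,1]$. By construction, $1/\ell_\alpha(X_t)$ is monotonically non-decreasing in $t$. For any essential subsurface $W$ with $\alpha\subset\partial W$, the short curves of $S$ which could realize $\pi_W(X_t)$ are either disjoint from $\ba$, and so their lengths and configuration change only mildly along the path, or else cross $\ba$, in which case they only get longer as $\ell_\alpha$ shrinks (via the collar lemma) and so are not selected once $\ell_\alpha$ is small. Combined with the fact that no twist coordinate varies — so annular projections are fixed — this forces $\pi_W(X_t)$ to stay within a uniformly bounded neighborhood of $\pi_W(X_0)$, of diameter at most some $c'(S)$ depending only on $S$.

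Finally, setting $h(\delta,K,S)=K+c'(S)$ (increased if necessary so that the pinching endpoint at $X_1$ guarantees $1/\ell_\alpha(X_1)>K$, which accounts for the $\delta$ dependence), the hypothesis $\mm_\alpha(X_0,\mu)>h$ guarantees that at each $t$, either $1/\ell_\alpha(X_t)$ exceeds $K$ or the subsurface $W$ realizing the initial maximum still satisfies $d_W(X_t,\mu)\geq\mm_\alpha(X_0,\mu)-c'(S)>K$. The hard step is precisely the quantitative subsurface-projection control described above: understanding how $\pi_W$ moves under Fenchel--Nielsen length pinching when $\partial W$ contains curves of $\ba$. This requires careful use of the Bers-marking definition, the collar lemma, and the critical fact that twists (and hence annular projections) are held constant along the path.
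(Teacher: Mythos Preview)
This lemma is not proved in the present paper: it is quoted (with the minor reformulation noted in the Remark following Lemma~\ref{W connected}) from \cite[Lemma~6.1]{nobumping}, so there is no proof here against which to compare your proposal. The paper does say that the arguments imported from \cite{nobumping} ``analyzed the relationship between Fenchel--Nielsen coordinates on Teichm\"uller space and subsurface projections,'' which is exactly the framework you adopt, so your overall strategy matches the intended one.

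Your sketch is plausible, but several steps would need more care in a complete proof. First, the claim that annular projections are fixed because Fenchel--Nielsen twist coordinates are held constant is only coarsely true: the relation between the twist parameter at $\alpha$ and the annular subsurface coordinate depends on $\ell_\alpha$, and one must verify that the chosen normalization is stable under pinching. Second, for property~(2) the relevant subsurfaces $W$ have $\alpha\subset\partial W$ but may still contain or be crossed by other curves of $\ba$, so they are not covered by your property-(3) reasoning; the bound $c'(S)$ on $\diam\pi_W(\{X_t\})$ for such $W$ genuinely requires a separate argument. These are precisely the quantitative subsurface-projection estimates you correctly flag as the hard step, and in \cite{nobumping} they occupy a substantial preparatory section (their Section~5) underlying both this lemma and Lemma~\ref{W connected}.
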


The second result allows one to join surfaces where a multicurve is  short in a controlled manner.

\begin{lemma}{W connected}{\rm (\cite[Lemma 5.11]{nobumping})} 
Given a collection $\ba$ of disjoint, non-parallel, essential simple closed curves on a (possibly disconnected) surface $S$, 
let  $\mathcal W$ be the collection of components of $S\setminus\ba$ 
which are not thrice-punctured spheres. If $\ep < \ep_0$, $\{\lambda_W\}_{W\in\mathcal W}$ is a collection of filling laminations on
components of $\mathcal W$ and $\mathbb U=\{U_W\}_{W\in\mathcal W}$ where each $U_W$ is a neighborhood of 
$\lambda_W$ in $\mathcal{C}(W)$, then
there exist neighborhoods $\mathbb U'=\{U'_W\subset U_W\}_{W\in\mathcal W}$ of $\lambda_W$ in $\mathcal{C}(W)$,
such that if $Y_0,Y_1\in \mathcal T(S)$ and $\ell_\alpha(Y_i)<\delta$ for all $\alpha \in\ba$ and 
$\pi_W(Y_i)\in\mathcal U'$ for all $W\in\mathcal W$ and $i=0,1$, then there exists a path $\{Y_t\}_{t\in [0,1]}$ joining $Y_0$ to $Y_1$
such that 
\begin{enumerate}
\item
$\ell_\alpha(Y_t)<\delta$ for all $\alpha \in\ba$  and $t\in[0,1]$, and 
\item
$\pi_W(Y_t)\in\mathcal U$ for all $W\in\mathcal W$  and $t\in[0,1]$.
\end{enumerate}
\end{lemma}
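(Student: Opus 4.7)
The plan is to use Fenchel--Nielsen coordinates adapted to $\ba$ to decompose the construction into two stages: a component-wise deformation on each $W \in \mathcal{W}$ controlling $\pi_W$, followed by a linear interpolation of the length and twist parameters along $\ba$. First I would extend $\ba$ to a pants decomposition of $S$ by adjoining auxiliary curves in the interior of each $W$; the resulting Fenchel--Nielsen coordinates, restricted to the curves interior to a fixed $W$, give a Fenchel--Nielsen coordinate system on $\Teich(W)$ (viewing $\partial W$ as cusps). This produces a natural restriction map $Y \mapsto Y|_W \in \Teich(W)$. Since $\delta < \ep_0$, the collar lemma implies that the geometry of $Y$ inside $W$ differs from that of $Y|_W$ only by uniform bilipschitz distortion, and in particular $\pi_W(Y)$ and $\pi_W(Y|_W)$ agree up to a uniform additive error in $\mathcal{C}(W)$.

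To choose $U'_W$, I would combine Gromov hyperbolicity of $\mathcal{C}(W)$ (Masur--Minsky) with Klarreich's identification of $\partial_\infty \mathcal{C}(W)$ with filling laminations. Realize $U_W$ as a horoball-type neighborhood of $\lambda_W$ in the Gromov compactification, and take $U'_W$ to be a deeper such neighborhood; the depth is chosen to absorb (i) the additive error in the comparison of $\pi_W(Y)$ with $\pi_W(Y|_W)$, (ii) the uniform quasi-geodesicity constants for $\pi_W$ along Teichm\"uller geodesics in $\Teich(W)$ (Rafi, Masur--Minsky), and (iii) the Morse-type fellow-traveler constant in the hyperbolic space $\mathcal{C}(W)$. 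With the depth suitably chosen, any uniform quasi-geodesic between two points of $U'_W$ remains inside $U_W$.

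The path is then built by enumerating $\mathcal{W}$ and, one component at a time, deforming $Y$ along a Teichm\"uller geodesic (or any path with uniformly quasi-geodesic $\pi_W$-image) in $\Teich(W)$ from $Y_0|_W$ to $Y_1|_W$, with all other Fenchel--Nielsen coordinates held fixed. Because both endpoints lie deep in $U'_W$, hyperbolicity of $\mathcal{C}(W)$ forces the entire projected quasi-geodesic to remain in $U_W$; because the deformation is supported on coordinates interior to $W$, projections to any other $W' \in \mathcal{W}$ and the lengths on $\ba$ are unchanged. After all $W$-stages are complete, one linearly interpolates the length and twist parameters on $\ba$: convexity of $[0,\delta)$ keeps all $\ba$-lengths below $\delta$, and since the collar lemma (using $\ell_\ba < \ep_0$) forces the shortest curve realizing $\pi_W(Y)$ to lie in the interior of $W$, the projections $\pi_W$ are insensitive (up to an additive error already absorbed into $U'_W$) to twists and lengths on $\ba$.

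The main obstacle is making the quasi-geodesic control in $\mathcal{C}(W)$ precise enough to absorb the various bounded additive errors into a single explicit choice of $U'_W$. This rests on two inputs: the Rafi--Masur--Minsky result that subsurface projections of Teichm\"uller geodesics are uniform unparametrized quasi-geodesics in $\mathcal{C}(W)$, and Morse-type stability of quasi-geodesics in the Gromov hyperbolic space $\mathcal{C}(W)$. Granting these, the depth of $U'_W$ needed relative to $U_W$ is controlled by an explicit combination of the hyperbolicity, quasi-geodesicity, and collar-lemma constants, which yields the lemma.
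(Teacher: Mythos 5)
This lemma is quoted from \cite[Lemma 5.11]{nobumping}; the present paper does not reprove it, so there is no in-text argument to compare against. Your plan --- Fenchel--Nielsen coordinates adapted to $\ba$, deforming the interior coordinates of each $W\in\mathcal W$ one component at a time with everything else frozen, then interpolating the $\ba$-length and twist coordinates --- is a natural structure and yields the statement once the constants are managed. (You have also correctly read the statement's ``$\ep<\ep_0$'' as an upper bound on the length scale $\delta$, which is surely the intent.) A few points deserve more care.

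First, the assertion that $\ell_{\ba}<\ep_0$ forces the shortest curve realizing $\pi_W(Y)$ to lie inside $W$ is not literally correct for topologically complicated $W$: the Bers constant of $W$ grows with its complexity, while the collar half-width at scale $\ep_0$ is a fixed number, so a shortest essential curve meeting $W$ can exit $W$ through a collar. What one actually uses is that the arcs inside $W$ of such a curve are governed by $Y|_W$ alone, so the surgered curve defining $\pi_W(Y)$ agrees with the intrinsic projection of $Y|_W$ up to a uniform additive error, whether or not the curve stays in $W$. This still gives exactly the comparison you need, but the argument should be phrased that way.

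Second, $U_W$ is an arbitrary neighborhood of the boundary point $\lambda_W$, and as such has no built-in quasi-geodesic stability. The correct order of operations is to first pass to a ``horoball-type'' sub-neighborhood (a superlevel set of the Gromov product with $\lambda_W$) contained in $U_W$, and only then choose $U'_W$ deep enough in that horoball that uniform quasi-geodesics between its points remain in the horoball, hence in $U_W$. Applying Morse stability directly to the given $U_W$ is not meaningful.

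Third, routing the $W$-stage through Teichm\"uller geodesics, Rafi's unparametrized quasi-geodesicity of subsurface projections, and Morse stability is correct but considerably heavier than necessary: the source \cite{nobumping} obtains the required control on $\pi_W$ along paths built directly from Fenchel--Nielsen coordinate changes, the same kind of elementary estimate that underlies Lemma~\ref{twist and shrink} in the present paper. That is a difference of economy, not a gap, but it is worth knowing the simpler tools suffice.
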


\noindent
{\bf Remark:} In Lemma 6.1 in \cite{nobumping} the basepoint $\mu$ is a conformal structure on $S$, 
while here it is a complete marking. However, as we are only interested in the projection to the curve complex 
and for any conformal structure there is a complete marking 
that has coarsely the same image in the projection to any curve complex 
the statements are equivalent. We are using a complete marking here to match with Proposition \ref{preliminary bounds}.

\medskip\noindent
\begin{proof}[Proof of Proposition \ref{joining nearby points}]
Let $V'$ be an open neighborhood of $\bar\tau$ in $AH(M)$  so that $V$ and $V'$ have disjoint closures.
If $\tau=\bar\tau$, let
$V=V'$. (Note, we only need $V'$ in the case of interval bundles).
Proposition \ref{nbhd system} implies that there exists $\delta>0$ and
$\mathbb U=\{U_W \}_{W\in \mathcal W_\tau}$  so that each $U_W$
is a neighborhood of $\lambda_W\in\partial_\infty\mathcal C(W)$ and 
$$\mathcal U(\delta, \mathbb U,\tau)\subset V\cup V'.$$

Lemma \ref{W connected} gives a collection $\mathbb U'=\{ U'_W\}_{W\in\mathcal W_\tau}$ 
of sub-neighborhoods of $\mathbb U$ so that if we let $\mathcal T(\delta/2,\mathbb U')$
be the set of surfaces $Y\in\mathcal T(\partial_0M)$ such that 
\begin{enumerate}[(A)]
\item
$\ell_\alpha(Y)<\delta/2$  for all $\alpha\in p_\tau$, and 
\item
$\pi_W(Y)\in U_W'$ for all $W\in\mathcal W_\tau$,
\end{enumerate}
then any $Y_0,Y_1\in\mathcal T(\delta/2,\mathbb U')$ can be connected by a path $\{Y_t\ |\ t\in [0,1]\}$ so that 
\hbox{$\ell_\alpha(Y_t)<\delta/2$} and $\pi_W(Y_t)\in U_W$ for all $t\in [0,1]$ and all $W\in\mathcal W_\tau$.
Let $\{\rho_t\}_{t\in[0,1]}$ be the path in ${\rm int}(AH_0(M))$ so that $\nu(\rho_t)=Y_t$.
Bers \cite[Thm. 3]{bers-slice} proved that if $\alpha$ is a simple closed curve on $\partial_0M$, $M$ has
incompressible boundary and $\sigma\in{\rm int}(AH_0(M))$, then $\ell_\alpha(\sigma)\le 2\ell_\alpha(\nu(\sigma))$, so this path is
entirely contained in $\mathcal U(\delta, \mathbb U, \tau)$. To summarize, we have shown that any two points  
$\rho_0,\rho_1\in{\rm int}(AH_0(M))$
such that $\nu(\rho_0),\nu(\rho_1)\in \mathcal T(\delta/2,\mathbb U')$ may be joined by a path
entirely contained in $\mathcal U(\delta, \mathbb U, \tau)$

We will complete the proof by finding $\epsilon>0$ and  a collection \hbox{$\widehat{\mathbb U} = \{{\hat U}_W\}_{W\in \mathcal W_\tau}$}
of  sub-neighborhoods of $\mathbb U$, so that if \hbox{$\rho \in \mathcal U(\epsilon, \widehat{\mathbb U}, \tau)$},
then $\rho$ can be connected to a representation $\hat\rho$ so that $\nu(\hat\rho)\in\mathcal T(\delta/2,\mathbb U')$ by a 
path entirely contained in $\mathcal U(\delta, \mathbb U, \tau)$. It then follows that any two points in
$\mathcal U(\epsilon, \widehat{\mathbb U}, \tau)$ can be joined by a path in  $\mathcal U(\delta, \mathbb U, \tau)$.

Proposition \ref{nbhd system} will then imply that there exists a neighborhood $\hat V\subset V$ of $\tau$ so that
$\hat V\cap {\rm int}(AH_0(M))$ is contained in $\mathcal U(\epsilon, \widehat{\mathbb U}, \tau)$. It follows that any two points in 
$\hat V\cap {\rm int}(AH_0(M))$ can be joined by a path in $\mathcal U(\delta, \mathbb U,\tau)\subset V\cup V'$.
Since $V$ and $V'$ have disjoint closures, the path must be entirely contained in $V$.

\bigskip

We fix a complete marking $\mu$ on $\partial_0M$.
Proposition \ref{preliminary bounds} provides a neighborhood $ \mathcal U(\delta_0,\mathbb U_0,\tau)$
of $\tau$, where $\mathbb U_0=\{(U_0)_W\}_{W\in\mathcal W_\tau}$, and $R>0$,
such that 
$$\mm_\alpha(\sigma_M(\rho),\mu)<R.$$
if $\rho\in \mathcal U(\delta_0,\mathbb U_0,\tau)$.
We may assume, without loss of generality, that $\delta<\delta_0$ and $\mathbb U\subset\mathbb U_0$.

We will apply Lemma \ref{twist and shrink} to find $\epsilon>0$ and $\widehat{\mathbb U}$ so that any
$\rho_0\in\mathcal U(\epsilon,\widehat{\mathbb U},\tau)$
may be joined to a representation $\hat\rho$ so that  $\nu(\rho_1)\in\mathcal T(\delta/2,\mathbb U')$  by a path $\{\rho_t\}$
in  $\mathcal U(\delta, \mathbb U, \tau)$. 
It is easy to use part (3) of Lemma \ref{twist and shrink} to choose the sub-neighborhoods $\widehat{\mathbb U}$
so that the projection of the path stays in $\mathbb U$. It is more difficult to ensure that each curve $\alpha$ in $p_\tau$ 
has length less than $\delta$ on the entire path. Here we use part (2) of Lemma \ref{twist and shrink}
to show that $\mm_\alpha(\nu(\rho_t), \nu(\sigma_M(\rho_t)))$ is large, which in turn, by Theorem \ref{KGCC results},
will imply that $\alpha$ is short.

Theorem \ref{KGCC results} provides $K>0$, so that if $\gamma$ is a simple closed curve in $\partial_0M$, 
$\rho\in AH_0(M)$ and $\mm_\gamma(\nu(\rho),\sigma_M(\rho))>K$, then 
$$\ell_\gamma(\rho)<\delta/2.$$

Let $c= c(\partial_0M)$ and $h=h(\delta,K+R,\partial_0M)$ be the constants given by Lemma \ref{twist and shrink}.
Theorem \ref{KGCC results} implies that there exists
$\ep>0$ so that if $\gamma$ is a simple closed curve in $\partial_0 M$ and $\ell_\gamma(\rho)<\ep$,
then 
$$\mm_\gamma(\nu(\rho),\sigma_M(\rho))>h+R.$$
Let $\widehat{\mathbb U}=\{ \hat U_W\}_{W\in\mathcal W_\tau}$ be a collection of neighborhoods 
of the ending laminations of $\tau$ so that, for each $W\in W_\tau$,
the neighborhood of $\hat{U}_W$ of radius $c$ in $\mathcal C(W)$ is contained in $U_W'$.

If $\rho \in\mathcal U(\epsilon, \widehat{\mathbb U}, \tau)$, then $\ell_\alpha(\rho) < \epsilon$,
for all $\alpha \in p_\tau$, so
$$\mm_\gamma(\nu(\rho),\sigma_M(\rho))>h+R,$$
which implies
that $\mm_\gamma(\nu(\rho),\mu)>h$, since $\mm_\gamma(\mu,\sigma_M(\rho))<R$. 
Lemma \ref{twist and shrink} then implies that there exists a path
$\{X_t\ |\ t\in [0,1]\}$ in $\Teich(\partial_0 M)$ with $X_0 = \nu(\rho)$ so that
\begin{enumerate}
\item 
$l_{\alpha}(X_1) < \delta/2$ for each $\alpha\in p_\tau$,
\item  
$\mm_{\alpha}(X_t,\mu) > K+R$ for each $\alpha\in p_\tau$ and each $t\in[0,1]$, and
\item 
$\diam(\pi_W(\{X_t:t\in[0,1]\})) < c$  for all  $W\in\mathcal W_\tau$.
\end{enumerate}
Let $\{\rho_t\ |\ t\in [0,1]\}$  be the path  in ${\rm int}(AH_0(M))$ with $\nu(\rho_t) = X_t$ for all $t$. 
It only remains to check that $\{\rho_t\}\subset \mathcal U (\delta, \mathbb U, \tau)$ for all $t$  and that 
\hbox{$\nu(\rho_1)\in\mathcal T(\delta/2,\mathbb U')$}.

Property (1) implies that $\ell_\alpha(\nu(\rho_1)) < \delta/2$ for all $\alpha\in p_\tau$.
Property (3) implies that if $W\in\mathcal W_\tau$, then
$$\diam(\pi_W(\{X_t:t\in[0,1]\})) < c.$$
It follows from the definition of $\hat U_W$ and
the fact  that \hbox{$\pi_W(X_0)=\pi_{W}(\rho)\in\hat{U}_W$}, that
$$\pi_{W}(\rho_t)=\pi_W(X_t)\in U_W'\subset U_W$$
for all $t\in [0,1]$ and all $W\in\mathcal W_\tau$.
In particular, \hbox{$\nu(\rho_1)\in\mathcal T(\delta/2,\mathbb U')$}.

In order to verify that  $\rho_t\in \mathcal U(\delta,\mathbb U,\tau)$ for all $t\in [0,1]$, 
it remains to check that $\ell_\alpha(\rho_t)<\delta$ for all $t\in [0,1]$.
If this is not the case, there exists 
$s \in [0,1]$ and $\alpha\in p_\tau$ so that $\ell_{\alpha}(\rho_s) = \delta<\delta_0$. 
Since 
$\rho_s\in \mathcal U(\delta_0,\mathbb U_0,\tau)$, we know that
$\mm_{\alpha}(\sigma_M(\rho_s),\mu)<R$. Therefore, by applying Property (2) above and the triangle inequality for
$\mm_\alpha$, we see that
\begin{eqnarray*}
\mm_{\alpha}(\nu(\rho_s),\sigma_M(\rho_s)) &= & \mm_{\alpha}(X_s,\sigma_M(\rho_s))\\
& \ge & \mm_{\alpha}(X_s,\mu)-\mm_{\alpha}(\sigma_M(\rho_s),\mu)\\
 & > & K+R-R=K.\\
 \end{eqnarray*}
So, by our assumptions on $K$, $\ell_{\alpha}(\rho_s)<\delta/2$,  which is a contradiction.

This completes the proof that $\rho_t \in \mathcal U(\delta, \mathbb U,\tau)$ for all $t \in [0,1]$, and hence 
Proposition \ref{joining nearby points}.
\end{proof}

\end{document}